\def \C {\mathbb{C}}
\def \E {\mathbb{E}}
\def \P {\mathbb{P}}
\def \R {\mathbb{R}}
\def \S {\mathbb{S}}
\def \tot {\mathrm{tot}}
\def \F {\mathrm{F}}
\def \KL {\mathrm{KL}}
\def \calC {\mathcal{C}}
\def \calP {\mathcal{P}}
\def \calS {\mathcal{S}}
\def \rank {\mathrm{rank}}
\def \range {\mathrm{range}}
\def \tr {\mathrm{tr}}
\def \d {\mathrm{d}}
\def \rd {\mathrm{d}}
\renewcommand*\env@matrix[1][*\c@MaxMatrixCols c]{%
  \hskip -\arraycolsep
  \let\@ifnextchar\new@ifnextchar
  \array{#1}}
\newtheorem{theorem}{Theorem}
\newtheorem{lemma}{Lemma}
\title{A Langevin sampler for quantum tomography}
\author{Tameem Adel\thanks{National Physical Laboratory, Teddington, TW11 0LW, UK.}\ , Abhishek Agarwal\footnotemark[1]\ , Stéphane Chrétien\thanks{Laboratoire ERIC, Université Lyon 2, 5 Av. Pierre Mendès
France, Bron, 69500, France.}\ , Estelle Massart\thanks{ICTEAM, UCLouvain, Avenue Georges Lemaître 4, 1348 Louvain-la-Neuve,  Belgium.}\ \thanks{This work was initiated and partly undertaken while this author was a postdoctoral researcher at the Mathematical Institute, University of Oxford, UK.}\ \thanks{Corresponding author: \url{estelle.massart@uclouvain.be}.} , \\ Danila Mokeev\footnotemark[3]\ , Ivan Rungger\footnotemark[1]\ \thanks{Department of Computer Science, Royal Holloway, University of London, Egham, TW20 0EX, UK.}\ , Andrew Thompson\footnotemark[1] }
\date{\today}
\begin{document}
\maketitle

\paragraph{Abstract:} Quantum tomography involves obtaining a full classical description of a prepared quantum state from experimental results. We propose a Langevin sampler for quantum tomography, that relies on a new formulation of Bayesian quantum tomography exploiting the Burer-Monteiro factorization of Hermitian positive-semidefinite matrices. If the rank of the target density matrix is known, this formulation allows us to define a posterior distribution that is only supported on matrices whose rank is upper-bounded by the rank of the target density matrix. Conversely, if the target rank is unknown, any upper bound on the rank can be used by our algorithm, and the rank of the resulting posterior mean estimator is further reduced by the use of a low-rank promoting prior density. This prior density is a complex extension of the one proposed in [Annales de l'Institut Henri Poincar\'e Probability and Statistics, 56(2):1465–1483, 2020]. We derive a PAC-Bayesian bound on our proposed estimator that matches the best bounds available in the literature, and we show numerically that it leads to strong scalability improvements compared to existing techniques when the rank of the density matrix is known to be small.

\section{Introduction}

The ability to prepare, control, and maintain quantum states has seen tremendous progress since the mid-90s, and today devices with hundreds of qubits have been developed~\cite{bluvstein2024logical,acharya2024quantum}. With these advancements arises the task of verifying the correctness of quantum states that have been prepared, which can then also enable the characterization of different components of the device~\cite{lall2025reviewcollectionmetricsbenchmarks}. One widely-used approach to performing this characterization involves obtaining a full classical description of a prepared quantum state from experimental results, known as quantum state tomography or simply quantum tomography~\cite{vogel1989}. Methods for performing quantum tomography are widely used for the characterization of quantum computers~\cite{proctor2025benchmarking}, quantum sensors~\cite{farooq2022robust}, and quantum communication networks~\cite{Bouchard2019quantumprocess}. 

Quantum tomography, in general, requires an exponentially-increasing number of experimental measurements with the size of the quantum system due to the exponentially-growing dimension of the Hilbert space. Thus, it is typically only used to characterize quantum states for a small number of qubits. A single qubit is a two-level quantum system, and a full description of an $n$-qubit quantum system is provided by its density matrix $\rho^0$, where  $\rho^0 \in \C^{d \times d}$, with $d = 2^n$. The density matrix can be estimated by performing measurements on an ensemble of copies of $\rho^0$, which are typically obtained by repeatedly re-initializing the system and preparing the quantum state after each measurement. Quantum tomography, then, amounts to finding a density matrix $\rho^0$ that characterizes the unknown quantum state of a 2-level $n$-qubit system, given a set of measurements of repeated state preparation. 

There are a number of different methods that have been developed to reconstruct the density matrix from experimental results; see \cite{gebhart2023learning} for a recent review. While maximum likelihood estimators have attracted much attention \cite{Guta2020}, several works explore a Bayesian formulation for quantum tomography, allowing for a quantification of the state's uncertainty \cite{Blume2010,Huszar2012,Granade2016,Mai2017, Lukens2020,Mai2021}. Quantification of this uncertainty is important since it allows accounting for limited statistical precision in the data and enables distinguishing true features of the quantum state from artificial ones introduced by finite statistics \cite{Suess_2017,agarwal2024modelling,agarwal2025fasttracking}. Given a suitably selected prior density and likelihood function, Bayesian quantum tomography defines a posterior distribution on the set of density matrices, whose expectation provides a natural estimate for the density matrix of the system. However, in practice, computing this expectation requires evaluating a high-dimensional integral, which is computationally challenging. Existing approaches for quantum tomography rely on Markov Chain Monte-Carlo (MCMC) sampling, typically combined with Metropolis-Hastings and possibly enhanced by preconditioned Crank-Nicolson, see \cite{Mai2021}. Here we propose an alternative that relies on Langevin sampling. 

Langevin dynamics are considered a class of MCMC techniques \cite{Neal2011a}. In cases where direct sampling is not straightforward, Langevin dynamics are typically used to obtain sequences of random samples from probability distributions. A general Langevin dynamics-based algorithm proposes new states using evaluations of the gradient of the log-posterior of the target probability density function. 

\subsection{Related works}
The first and most straightforward method for quantum tomography simply inverts the equations that result from Born's rule, which relates the probabilities of measurement outcomes to the classical description of the quantum state given by the density matrix $\rho^0$. Arguably the main drawback of this method, called linear or direct inversion in the literature, is its possible failure in returning an estimator that has the structural properties of a density matrix, calling for an additional projection step onto the set of feasible density matrices \cite{Guta2020}. For this reason, maximum likelihood estimation, which amounts to finding the most likely density matrix for a given set of measurements \cite{Hradil2004}, became the method of choice for quantum tomography.

 Motivated by the possibility of capturing the uncertainty of the estimate, more and more works address a Bayesian formalism for quantum tomography \cite{Jones1991, Buzek1998, Blume2010, Kravtsov2013, Kueng2015}.  For example, the author of \cite{Blume2010} proposed a Bayesian framework for quantum tomography in which the estimated density matrix is then obtained as the mean of the posterior distribution, approximated using Metropolis-Hastings. This method was extended by \cite{Huszar2012}, who proposed an adaptive Bayesian quantum tomography method, endowing Bayesian quantum tomography with a sequential importance sampling strategy which determines adaptively new measurements to use in the experiment. Furthermore, in \cite{Granade2016} the authors handle time-dependent states and propose insightful priors, namely, families of priors built from default (uninformative) priors and that have prescribed mean, chosen, e.g., based on experimental design or previous experimental estimates.

 In the context of quantum computing, one typically prepares pure states which are characterized by rank-1 density matrices \cite{Nielsen_Chuang_2010}. However, the presence of incoherent noise in quantum computers leads to the system being described by mixed states \cite{preskill1998lecture}. In certain cases, such as when the noise affects qubits locally, the resulting quantum state is of low rank \cite{Gross2010} and this low-rank property can be utilized to simplify quantum tomography. For example, \cite{Alquier2013} further project the linear projection estimator onto the set of fixed-rank density matrices, and prove the consistency of their proposed estimator. Alternatively, \cite{Guta2012} construct a sequence of maximum likelihood models with restricted rank, and select among those models the one that achieves the best trade-off between the risk and model complexity measured using the well-known Akaike Information Criterion (AIC) or Bayesian Information Criterion (BIC). Other approaches such as compressed sensing have also been developed for the tomography of low-rank states \cite{Gross2010}. 

For Bayesian quantum tomography, rank information can be accounted for by resorting to a suitable prior density; see for example the  prior used in \cite{Granade2016} that relies on the Ginibre ensemble. Alternatively,  the authors of \cite{Mai2017} rewrite density matrices as a sum of rank 1 terms scaled by factors sampled according to a Dirichlet distribution, which promotes their sparsity. They propose two estimators that are inherited from the Bayesian quantum tomography formalism: the \emph{dens-estimator} (where the likelihood captures the distance to the least-squares estimates of the density matrix), and the \emph{prob-estimator} (where the likelihood captures the distance between the theoretical and empirical state probability vectors, which coincides with the approach used in this paper). The authors compute these estimators using the Metropolis-Hastings algorithm, and derive corresponding PAC-Bayesian bounds. As Metropolis-Hastings is slow for high-dimensional systems, another efficient adaptive Metropolis-Hastings implementation for the prob-estimator is proposed in \cite{Mai2021}, relying on Crank-Nicholson preconditioning. A similar approach was used for the dens-estimator in  \cite{Lukens2020}. 
We finally mention the more recent work \cite{Quek2021} which proposed an adaptive quantum state tomography algorithm, in line with \cite{Huszar2012}, where the standard Bayes' update is performed using a recurrent neural network.

\subsection{Contributions} Exploiting the celebrated Burer-Monteiro factorization of Hermitian positive-semidefinite matrices, we formulate the sampling problem in terms of low-rank factors of the density matrix, and derive a Langevin sampler for the resulting posterior distribution. For this, we propose a new low-rank promoting prior density, that is a complex generalization of the prior proposed in \cite{Dalalyan2020}. We derive PAC-Bayesian bounds in the complete measurement setting for our proposed model, obtaining an identical leading rate (up to constant/logarithmic factor) to the one obtained in \cite{Mai2017,Mai2021}, and we compare our method numerically to the MCMC sampler proposed in \cite{Mai2017}. We show that, while both estimators lead to comparable accuracies, our proposed Langevin sampler converges in fewer iterations than the MCMC sampler from \cite{Mai2017}. Moreover, our numerical results also indicate that, when the rank of the density matrix is known to be small (allowing smaller dimensions for the Burer-Monteiro factors), each iteration of the Langevin sampling algorithm can be computed at a much lower cost than iterations of the sampler proposed in \cite{Mai2017}.

\section{Bayesian quantum tomography} \label{sec:gen_framework}
Here we address the estimation of the density matrix $\rho^0 \in \C^{d \times d}$ of a 2-level $n$-qubit system, implying that $d = 2^n$. The density matrix $\rho^0$ can be decomposed as a convex combination of pure eigenstates,
$$\rho^0=\sum_{i=1}^{d}\lambda_i v_i v_i^{\ast},$$
where the $\{v_i v_i^{\ast}\}$ are the pure eigenstates and where the $\{\lambda_i\}$ are the associated eigenvalues. It follows that $\rho^0$ is a complex Hermitian positive-semidefinite matrix whose trace is equal to one. The set of density matrices of an $n$-qubit system therefore corresponds to the complex spectrahedron:
\begin{equation}  \label{eq: spectra}
    \calC \calS = \{ \rho \in \C^{d \times d} : \rho = \rho^*, \rho \succeq 0, \tr(\rho) = 1 \}, 
\end{equation}
on which we assume a prior density $\mu$ to be available.

Analogously to \cite{Mai2017}, we assume that, for each qubit, one can measure one of the three Pauli observables $\sigma_x$, $\sigma_y$, $\sigma_z$. There are therefore $3^n$ possible experiments, with outcome vector in $\{-1,1\}^n$. Let $R^a$ be the random vector defined as the result of  experiment $a \in \{1, \dots, 3^n\}$,  let $\calP_s^a$ be the measurement operator associated with experiment $a \in \{1, \dots, 3^n\}$ and outcome vector $o(s) \in \{-1,1\}^n$, for $s \in \{1, \dots, 2^n\}$. We write
\[ p_{a,s}^0 := \mathrm{prob}[R^a = o(s)],  \]
the probability of experiment $a$ to yield outcome $s$ which, according to Born's rule, is given by 
 \[ p_{a,s}^0  =\mathrm{trace}(\calP_s^a \cdot \rho^0). \]

While the true probability $p_{a,s}^0$ is typically unavailable, it can be approximated by using measurement outcomes associated to several replications of the system. Assuming that an experiment $a$ is replicated $m_a$ times, the true probability $p_{a,s}^0$ can thus be approximated by the empirical probability 
\begin{equation} \label{eq:empirical_freq}
    \hat p_{a,s} := \frac{1}{m_a} \sum_{i = 1}^{m_a} 1_{R_i^a = s},
\end{equation}
where $R_i^a$ is the corresponding $i$th replication of $R^a$, for $i = 1, \dots, m_a$. These empirical frequencies can be used to derive a likelihood function\footnote{We acknowledge a small abuse of terminology here, as this \emph{likelihood} does not come from a noise model on the measurements, and is therefore referred to as a pseudo-likelihood (and similarly, the resulting posterior a pseudo-posterior) in \cite{Mai2017}.}, that we choose here to be equal to 
\begin{align} \label{eq:likelihood_rho} 
     L: \C^{d \times d} \to \R : \rho  \mapsto  \sum_{s = 1}^{2^n} 
 \sum_{a = 1}^{3^n} ( \hat p_{a,s} - \tr(\calP_s^a \cdot \rho))^2.
\end{align}
Given a suitably selected prior density $\mu$, Bayesian quantum tomography defines a posterior distribution\footnote{As the likelihood does not necessarily come from a noise model on the measurements, the posterior is referred to as a pseudo-posterior in \cite{Mai2017}.} 
\begin{equation*}
    \hat \mu_\lambda(\rho) \propto \exp\left(- \lambda L(\rho)\right) \mu(\rho)
\end{equation*}
over the set of density matrices, for $\lambda >0$ a suitably chosen parameter that balances the likelihood and prior information. The density matrix $\rho^0$ can then be estimated by, e.g., the empirical average of the posterior density\footnote{This choice is aligned with the choice made in \cite{Mai2017, Mai2021,Lukens2020}. Note that the choice of a multimodal prior may motivate the use of the maximum \emph{a posteriori} estimator instead.}: 
\begin{equation} \label{eq:intro_estim}
    \hat \rho_\lambda = \int_{\rho \in \calC \calS} \rho \hat \mu_\lambda(\rho) \d \rho. 
\end{equation}
A possible strategy to compute this estimator is to rely on Langevin sampling, which samples the posterior distribution $\hat \mu_\lambda$ by generating the Markov chain
\begin{equation} \label{eq:langevin_rho}
     \rho_{k+1} = \rho_k - \eta_k \nabla f(\rho_k) + \sqrt{\frac{2\eta_k}{\beta_k}} w_k,
\end{equation} 
starting from some iterate $\rho_0$, where $f(\rho) := - \log(\hat \mu_\lambda(\rho))$ is the (opposite) log-posterior, $\eta_k$ the stepsize, and $\beta_k$ a temperature hyperparameter\footnote{While usually $\beta_k$ is set to one in the Langevin sampling literature (see, e.g., \cite{Dalalyan2020}), this temperature parameter is here introduced for flexibility. In this work, we will consider that $\beta_k$ is fixed for all iterations and learned using cross-validation. This amounts to merely rescaling the noise variance, or, equivalently, rescaling the log-posterior density $f$.}, and $w_k$ is a complex random matrix whose real and complex parts each follow a standard Gaussian distribution.

\section{A Burer-Monteiro formulation for Bayesian quantum tomography}

Note that \eqref{eq:langevin_rho} generates a Markov chain in the space of $d \times d$ density matrices, omitting the fact that the sought density is typically low-rank, leading to a possible reduction of the sampling space dimension. We propose instead to rely on the well-known Burer-Monteiro approach \cite{Burer2003}, and  represent any density matrix $\rho \in \C^{d \times d}$, assumed to be of rank $r$, by a factor $Y \in \C^{d \times r}$ such that $\rho = YY^*$. The advantage of this reformulation is twofold. First, this formulation results in a decrease of the dimension of the parameter space from $d^2$ to $dr$ complex numbers, hence, decreases the dimension of the posterior density from which to sample. Secondly, it naturally accounts for the Hermitian positive-semidefiniteness of the density matrix, as for any $Y \in \C^{d \times r}$, the matrix $YY^*$ is Hermitian positive semidefinite of rank at most $r$. Since, by definition of the trace and Frobenius norm, for all $\rho = YY^*$ there holds
\[  \tr(\rho) = \tr(YY^*) = \|Y\|_\F^2,\]
the unit trace constraint on density matrices is equivalent to requiring that the factors belong to the complex hypersphere 
\[\calC \S^{d \times r} =  \{ Y \in \C^{d \times r} : \| Y \|_\F = 1 \},\]
and the likelihood \eqref{eq:likelihood_rho} becomes  
\begin{align} \label{eq:likelihood} 
      L^{\mathrm{BM}}: \C^{d \times r} \to \R : Y  \mapsto  \sum_{s = 1}^{2^n} 
 \sum_{a = 1}^{3^n} ( \hat p_{a,s} - \tr(\calP_s^a \cdot YY^*))^2,
\end{align}
where the BM subscript stands for ``Burer Monteiro''.  Any prior distribution on $ \C^{d \times r}$ (or, more precisely, on the complex hypersphere $\calC \S^{d \times r}$) will thus naturally lead to a posterior distribution on $ \C^{d \times r}$ that can, in turn,  be used to derive an estimator for the target density matrix $\rho^0 = Y^0 Y^{0*}$. Note however that the use of Burer-Monteiro factorization relies on the assumption that the rank of the target density matrix is known. When the latter is unknown, any upper bound on the rank of the target density matrix can be used (including, when information is not available, the choice $r = d$). In this case, we argue that low-rankness can be further improved by selecting a suitable prior density.

A prior inducing low-rankness in the density matrices was proposed in \cite{Mai2017}, where the density matrices are represented as a product 
\[ \rho = V \Gamma V^*,\]
with $V$ a matrix with unit-norm (not necessarily orthogonal) columns, and $\Gamma$  a diagonal matrix with non-negative entries\footnote{While the non-orthogonality of the columns of $V$ precludes the interpretation of the diagonal entries of $\Gamma$ as eigenvalues of $\rho$, this approach is motivated by the fact that sampling unit-norm vectors is cheaper than sampling matrices with orthonormal columns for the uniform density that the authors are using.} $\gamma_1,\ldots,\gamma_d$. The prior is then chosen as $(\gamma_1, \dots, \gamma_d) \sim \mathcal{D}ir(\alpha_1, \dots, \alpha_d)$, with $\mathcal{D}ir(\alpha_1, \dots, \alpha_d)$ the Dirichlet distribution with parameters $\alpha_1, \dots, \alpha_d >0$, and $V_{:,i} \sim Unif(\mathcal{S}^{d-1})$, with $V_{:,i}$ the $i$th column of $V$ and $Unif(\mathcal{S}^{d-1})$ a uniform prior on the unit complex hypersphere. To promote sparsity, the authors of \cite{Mai2017} choose $\alpha_1 = \dots = \alpha_d = \alpha$ for some $\alpha$ close to zero (e.g., $\alpha_1 = \dots = \alpha_d = 1/d).$

We resort here to an alternative prior that is a complex extension of the spectral scaled Student's $t$-distribution described in \cite{Dalalyan2020}, and whose expressions makes it a natural choice for our Burer-Monteiro formulation:
\begin{equation} \label{eq:prior}
\nu_{\theta}(Y) = C_\theta \det(\theta^2 I_d + YY^*)^{-(2d+r+2)/2},
\end{equation}
for $Y \in \C^{d \times r}$, with $\theta > 0$ a distribution parameter, and $C_\theta = (\int_{\C^{d \times r}} \det(\theta^2 I_d + YY^*)^{-(2d+r+2)/2}\mathrm{d}Y)^{-1}$ a normalizing constant. Similarly as in \cite{Dalalyan2020}, note that 
\[  \nu_{\theta}(Y) \propto \Pi_{i = 1}^r (\theta^2 + \sigma_i^2)^{-(2d+r+2)/2}, \]
with $\sigma_1, \dots, \sigma_r$ the singular values of $Y$. As a result, our proposed prior amounts to assuming that the singular values of $Y$ follow a scaled Student's $t$-distribution, which promotes their sparsity; see, e.g.,  \cite{Dalalyan2012}. It follows that this prior promotes low-rankness of the matrix $YY^*$.

As stated by the next result, which is an immediate extension of \cite[Lemma 1]{Dalalyan2020}, the columns of $Y$ are marginally distributed according to a $d$-variate complex scaled Student's $t$-distribution. 

\begin{lemma} \label{lem:multivariatestudent}
    If $Y$ is a random $d \times r$ complex matrix having as density the function $\nu_{\theta}$, then the column vectors $y_i \in \C^d$ of $Y$, for $i = 1, \dots, r$, follow the $d$-variate complex scaled Student's $t$-distribution $(\sqrt{2/3} \theta)t_{3,d}$. As a consequence, it holds that $\int_{\C^{d\times r}} \|y_i\|^2 \nu_\theta(Y)\mathrm{d}Y = 2\theta^2 d$ for all $i$. 
\end{lemma}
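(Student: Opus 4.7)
The strategy mirrors the proof of the real-valued analogue in \cite{Dalalyan2020}, with the Jacobian adjusted to the complex setting. Since $\nu_\theta(Y)$ depends on $Y$ only through the Hermitian matrix $YY^* = \sum_{i=1}^r y_i y_i^*$, the density is invariant under column permutations of $Y$, so all column marginals coincide. It therefore suffices to compute the marginal density of the first column $y_1$.

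Write $Y = [y_1 \mid Y_{-1}]$ with $Y_{-1} \in \C^{d\times (r-1)}$, and set $N := \theta^2 I_d + y_1 y_1^*$, which is Hermitian positive definite. The key step is the change of variables $Y_{-1} = N^{1/2} W$ with $W \in \C^{d \times (r-1)}$. Combining the identity
\[ \theta^2 I_d + YY^* = N + Y_{-1}Y_{-1}^* = N^{1/2}(I_d + WW^*)N^{1/2} \]
with the matrix-determinant lemma $\det(N) = \theta^{2(d-1)}(\theta^2 + \|y_1\|^2)$ gives
\[ \det(\theta^2 I_d + YY^*) = \theta^{2(d-1)}(\theta^2 + \|y_1\|^2)\,\det(I_d + WW^*). \]
The Jacobian of the column-wise map $W \mapsto N^{1/2} W$ equals $\det(N)^{r-1}$, since each column contributes $|\det N^{1/2}|^2 = \det N$ as a complex-linear map on $\C^d$ viewed over $\R^{2d}$. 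Integrating $W$ out then produces a finite constant independent of $y_1$, so the marginal takes the form $p(y_1) \propto (\theta^2 + \|y_1\|^2)^{-\kappa}$ for the appropriate exponent $\kappa$ obtained by collecting the powers of $\det(N)$; this is precisely the density of the scaled complex Student's $t$-distribution $(\sqrt{2/3}\,\theta)\,t_{3,d}$.

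For the second-moment identity, set $R := \|y_1\|^2$ and pass to polar coordinates on $\C^d \cong \R^{2d}$: the induced one-dimensional density is proportional to $R^{d-1}(\theta^2 + R)^{-\kappa}$. The substitution $R = \theta^2 u$, combined with the standard Beta-function identity $\int_0^\infty u^{p-1}(1+u)^{-(p+q)}\,du = B(p,q) = \Gamma(p)\Gamma(q)/\Gamma(p+q)$, reduces $\E[R]$ to a ratio of two Beta functions. Telescoping gamma factors via $\Gamma(k+1) = k\Gamma(k)$ and $\Gamma(3/2) = \tfrac{1}{2}\Gamma(1/2)$ then collapses this ratio to $2\theta^2 d$, as claimed.

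The main obstacle is the careful bookkeeping of the complex-valued Jacobian, which contributes $\det(N)$ per column rather than the $\det(N)^{1/2}$ appearing in the real-valued setting of \cite{Dalalyan2020}, together with a convergence check on the $W$-integral $\int \det(I_d + WW^*)^{-(2d+r+2)/2}\,dW$, which is needed to legitimately absorb it into the normalizing constant and to ensure that the Beta integrals in the second-moment step are finite.
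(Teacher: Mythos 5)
Your overall strategy is the same as the paper's (reduce to the first column, factor $\theta^2 I_d + y_1y_1^*$ out of the determinant, integrate out the remaining columns after a linear change of variables), but the argument has a genuine gap at exactly its most delicate point: you never compute the exponent $\kappa$, and if you carry out the bookkeeping you announce, the claimed conclusion does not come out. With your Jacobian $\det(N)^{r-1}$, the marginal of $y_1$ is proportional to $\det(N)^{(r-1)-(2d+r+2)/2}\propto(\theta^2+\|y_1\|^2)^{-(2d+4-r)/2}$, i.e.\ $\kappa=(2d+4-r)/2$, whereas the complex $t_{3,d}$ density requires $\kappa=(2d+3)/2$; the two agree only when $r=1$. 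Moreover, for $r\ge 2$ the exponent you actually obtain makes $\int\|y_1\|^2\,\nu_\theta(Y)\,\mathrm{d}Y$ diverge (the integrand behaves like $\|y_1\|^{2-(2d+4-r)}$ against the volume element $\|y_1\|^{2d-1}\,\mathrm{d}\|y_1\|$, giving total power $r-3\ge -1$), so the second-moment identity cannot be recovered by your Beta-function computation, which silently assumes $\kappa=(2d+3)/2$.

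The source of the discrepancy is precisely the point you flag as ``the main obstacle''. The paper's proof uses the Jacobian $\det(I_d+xx^*)^{(r-1)/2}$ for the substitution $Z_{2:r}=(I_d+xx^*)^{1/2}\bar Z_{2:r}$ --- each column contributing $\det(\cdot)^{1/2}$, the real-case power --- and with that power the exponents do combine to give $-(2d+3)/2$. Your count of $\det(N)$ per column is the standard one for a $\C$-linear map viewed on $\R^{2d}$ (namely $|\det_{\C}T|^2$), and a direct check in the smallest nontrivial case $d=1$, $r=2$ (where the marginal of $y_1$ can be computed in closed form and equals $c(\theta^2+|y_1|^2)^{-2}$, not $c(\theta^2+|y_1|^2)^{-5/2}$) confirms your version. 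So your Jacobian is correct, but it is then incompatible with the exponent $(2d+r+2)/2$ in the prior: either that exponent must be changed (to $(2d+2r+1)/2$, so that $(r-1)-\kappa=-(2d+3)/2$), or the lemma holds only for $r=1$. As a proof of the lemma as stated your argument therefore fails at the final step; the inconsistency it exposes is worth reporting rather than being papered over by asserting the $t_{3,d}$ form.
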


In this work, we rely on this spectral scaled Student's $t$ prior distribution $\nu_{\theta}(Y)$ which, combined with the likelihood \eqref{eq:likelihood}, gives a posterior density 
\begin{equation} \label{eq:BM_posterior}
\hat \nu_{\lambda,\theta}^{\mathrm{BM}}(Y)  \propto \exp(- \lambda L^{\mathrm{BM}}(Y)) \nu_{\theta}(Y),  
\end{equation}
and our density matrix estimator is given by 
\begin{equation} \label{eq:estimator}
    \hat \rho_{\lambda,\theta}^{\mathrm{BM}} := \int_{\C^{d \times r}} Y Y^* \hat \nu_{\lambda,\theta}^{\mathrm{BM}}(Y) \mathrm{d}Y.
\end{equation}

The authors of \cite{Mai2017} derived a PAC-Bayesian bound for their estimator. We extend here their analysis to our alternative prior, showing that this new prior preserves the rate achieved in \cite{Mai2017}. Similarly to \cite{Mai2017}, we assume the complete measurement setting, meaning that each experiment $a \in \{1, \dots, 3^n\}$ is performed $m_a = m$ times with $m$ a given number, leading to $N_{\tot} = m 3^n$ measurements in total. The following result provides a PAC-Bayesian bound for our proposed estimator $\hat \rho_{\lambda,\theta}^{\mathrm{BM}}$ defined in \eqref{eq:estimator}.

\begin{theorem}[PAC-Bayesian bound] \label{thm:pac}
 Let $\lambda = 3m/8$, and let $\bar Y \in \C^{d \times r}$ satisfy $ \|Y^0-\bar Y \|_\F \|Y^0+\bar Y \|_\F \leq 3^{-3n/2} 2^{-n/2}/m$. Let $p$ be the rank of $\bar Y$. For any $\epsilon \in ]0,1[$, there holds with probability $1-\epsilon$
\begin{align*}
    \|\hat \rho_{\lambda, \theta}^{\mathrm{BM}}  - \rho^0 \|^2_\F  &\leq \frac{3}{N_{\tot}} \left(3^{3n/4} 2^{(n+6)/4} (r + \sqrt{r} \|\bar Y\|_\F) + \frac{2r}{m}+1\right) \\
    &+\frac{3^n 8}{ 2^n  N_{\tot}} \left( \log\left(\frac{2}{\epsilon}\right) + 2p (2^{n+1}+r+2) \log \left( 1+ \frac{\| \bar Y \|_2}{\theta} \right) \right).
\end{align*} 
\end{theorem}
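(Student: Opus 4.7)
The plan is to follow the standard PAC-Bayesian template for Gibbs posteriors, in the spirit of \cite{Dalalyan2020,Mai2017}, adapted here to the Burer--Monteiro parametrisation and to the complex Student's-$t$ prior $\nu_\theta$. As a first reduction, Jensen's inequality applied to \eqref{eq:estimator} together with the convexity of $\rho\mapsto\|\rho-\rho^0\|_F^2$ gives
\[
\|\hat\rho_{\lambda,\theta}^{\mathrm{BM}}-\rho^0\|_F^2 \le \int_{\C^{d\times r}} \|YY^*-\rho^0\|_F^2 \; \hat\nu_{\lambda,\theta}^{\mathrm{BM}}(\mathrm{d}Y),
\]
so it suffices to control this integral. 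A direct expansion of $M := \rho^0 - YY^*$ in the $n$-qubit Pauli basis, exploiting that $M$ is traceless Hermitian, yields a frame inequality of the form $\|M\|_F^2 \le L^*(Y)$ (and a matching upper bound up to an explicit function of $n$), where $L^*(Y):=\sum_{a,s}(p_{a,s}^0-\tr(\calP_s^a YY^*))^2$ is the population version of the loss \eqref{eq:likelihood}; this is the entry point for the $3^n/2^n$ prefactor of the theorem.

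For the PAC-Bayes step, I would rely on the Gibbs variational principle applied to $\hat\nu_{\lambda,\theta}^{\mathrm{BM}}\propto\exp(-\lambda L)\nu_\theta$: for any probability measure $\nu$ absolutely continuous with respect to $\nu_\theta$,
\[
\int L\, \mathrm{d}\hat\nu_{\lambda,\theta}^{\mathrm{BM}} \le \int L\, \mathrm{d}\nu + \frac{\KL(\nu\|\nu_\theta)}{\lambda}.
\]
To convert this into a high-probability bound on $\int L^*\,\mathrm{d}\hat\nu_{\lambda,\theta}^{\mathrm{BM}}$, I would use a Hoeffding/Bernstein-style argument on the centered loss $L-\E[L]$: each empirical probability $\hat p_{a,s}$ is an average of $m$ independent Bernoullis with variance at most $1/(4m)$, and each summand of $L$ is a bounded function of these frequencies, so the moment generating function of $L-\E[L]$ is sub-Gaussian with an explicit variance proxy. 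A Markov bound applied to this MGF then yields, with probability at least $1-\epsilon$, the desired conversion, at the cost of an extra $\log(2/\epsilon)/\lambda$ confidence gap and a variance contribution summed over the $3^n\cdot 2^n = 6^n$ outcomes that is responsible for the $2r/m+1$ summand of the theorem.

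The crucial free choice is the comparison measure $\nu$; I would take the translated prior $\nu^{\bar Y}(Y):=\nu_\theta(Y-\bar Y)$. The KL divergence is then handled using the spectral form $\nu_\theta(Y)\propto\prod_i(\theta^2+\sigma_i(Y)^2)^{-(2d+r+2)/2}$ together with the matrix determinant lemma applied to the rank-$p$ perturbation $\bar Y\bar Y^*$, yielding the complex analog of \cite[Lemma~2]{Dalalyan2020},
\[
\KL(\nu^{\bar Y}\|\nu_\theta)\le 2p(2d+r+2)\log(1+\|\bar Y\|_2/\theta),
\]
which is exactly the KL term of the theorem since $d=2^n$. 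For $\int L^*(Y)\, \mathrm{d}\nu^{\bar Y}(Y)$, I would expand around $\bar Y$ via the algebraic identity $(a-b)^2-(a-c)^2=(c-b)(2a-b-c)$ with $a=p_{a,s}^0$, $b=\tr(\calP_s^a YY^*)$, $c=\tr(\calP_s^a\bar Y\bar Y^*)$, and control the resulting terms with the moment identity $\E_{\nu^{\bar Y}}[\|Y-\bar Y\|_F^2]=2\theta^2 dr$ from Lemma~\ref{lem:multivariatestudent} together with Cauchy--Schwarz (which is what produces the $r+\sqrt{r}\|\bar Y\|_F$ expression). The bias $\|\bar Y\bar Y^*-\rho^0\|_F$ is controlled via the symmetric decomposition $\bar Y\bar Y^*-Y^0 Y^{0*}=\tfrac12\bigl((\bar Y+Y^0)(\bar Y-Y^0)^*+(\bar Y-Y^0)(\bar Y+Y^0)^*\bigr)$, which gives $\|\bar Y\bar Y^*-\rho^0\|_F\le\|Y^0+\bar Y\|_F\|Y^0-\bar Y\|_F$, so the theorem's hypothesis enforces a small bias.

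The main obstacle will be the bookkeeping in the PAC-Bayes step: getting all constants to line up so that the specific choice $\lambda=3m/8$ yields exactly the prefactors $3/N_{\tot}$ and $3^n\cdot 8/(2^n N_{\tot})$ in the theorem, and so that the somewhat unusual coefficient $3^{3n/4} 2^{(n+6)/4}$ emerges cleanly from the combination of the frame inequality, the Hoeffding variance proxy, and the moment bounds under $\nu^{\bar Y}$. The rest of the argument is largely a complex-variable adaptation of \cite{Dalalyan2020} to the Pauli-measurement setting.
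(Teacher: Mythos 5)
Your overall architecture matches the paper's proof: Jensen's inequality followed by the frame inequality $\|YY^*-\rho^0\|_\F^2\le 2^{-n}\|P(YY^*)-P^0\|_\F^2$, a PAC-Bayes bound with the translated prior $\bar\nu_\theta(Y)=\nu_\theta(Y-\bar Y)$ as comparison measure, the complex extension of \cite[Lemma 2]{Dalalyan2020} for the KL term, and the second moment identity of \Cref{lem:multivariatestudent} together with the choice $\theta\propto 6^{-3n/4}/m$ for the expected loss under the translated prior. The one step that would fail as you describe it is the concentration step. If you center the loss at $\E[L]$ and take a variance proxy obtained by summing a $Y$-independent bound over the $6^n$ outcomes, the resulting additive deviation term, after division by $\lambda 2^n$, is of order $3^{2n}/(m^{k}N_{\tot})$ for some $k\in\{0,1\}$, which is exponentially larger in $n$ than the theorem's leading term $3^{3n/4}2^{n/4}r/N_{\tot}$ unless $m$ itself grows exponentially; and, contrary to your attribution, the $2r/m+1$ summand does not come from this variance contribution but from the bias term $a=\|Y^0-\bar Y\|_\F\|Y^0+\bar Y\|_\F\le 3^{-3n/2}2^{-n/2}/m$ and the second moment $2dr\theta^2$ under the translated prior. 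The fix, which is exactly the content of \cite[Lemma 3]{Mai2017} used in the paper (\Cref{Lem:MaiLem3}), is to center the empirical loss at $L^{\mathrm{BM}}(Y^0)$ rather than at its expectation: the excess loss $\|P(YY^*)-\hat P\|_\F^2-\|P^0-\hat P\|_\F^2$ is \emph{linear} in $\hat P$, so its Bernstein variance proxy is $\tfrac{1}{m}\|P^0-P(YY^*)\|_\F^2$, i.e.\ proportional to the very quantity being bounded. This self-normalization is what yields the multiplicative factors $(1\pm\lambda/m)$ instead of an additive $6^n$-sized term, and only after optimizing the ratio $(1+\lambda/m)/(1-\lambda/m)$ do the choice $\lambda=m/2$ and the constants $3$ and $8$ emerge.

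A second point of bookkeeping you will need: combining the two exponential-moment inequalities via Catoni's argument bounds the risk integrated against the Gibbs posterior at the reparametrized temperature $\lambda_{\mathrm{new}}=\tfrac{\lambda}{2}(1+\tfrac{\lambda}{m})$ (see \cite[Eq. 10]{Guedj2019}), not at $\lambda$ itself; this is why the theorem is stated for $\lambda=3m/8$ while the internal free parameter is $m/2$. Your direct invocation of the Gibbs variational principle at temperature $\lambda$ would bound the risk of a different posterior, and the stated constants would not line up. The remaining ingredients of your sketch --- the KL bound for the translated prior, the decomposition $\bar Y\bar Y^*-Y^0Y^{0*}=\tfrac12\bigl((\bar Y-Y^0)(\bar Y+Y^0)^*+(\bar Y+Y^0)(\bar Y-Y^0)^*\bigr)$ for the bias, and the Jensen/moment computation producing $r+\sqrt{r}\|\bar Y\|_\F$ --- coincide with the paper's Lemmas 4 and 5 and are sound.
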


While the upper bound in \Cref{thm:pac} depends on an arbitrary matrix $\bar Y$, note that choosing $\bar Y = Y^0$ leads to $p = \rank(\rho^0)$, in which case we recover for large values of $n$ the rate $3^n \rank(\rho_0)/N_{\tot}$ obtained in \cite{Mai2017}, up to constant or logarithmic factors.

\section{Implementation and numerical results}

\subsection{Algorithmic implementation}

To avoid the need to manipulate complex numbers in our implementations, we introduce a change of variables, relying on the well-known vector space isomorphism between $\C^{d \times r}$ and a subset of $\R^{2d \times 2r}$ (see, e.g., \cite{Ollila2012}):
\begin{equation} \label{eq:psi}
    \psi : M^R+i M^I \mapsto \frac{1}{\sqrt{2}} \begin{pmatrix} M^R & -M^I \\ M^I & M^R \end{pmatrix}.
\end{equation}
It can then be readily checked that, for any $Y \in \C^{d \times r}$, 
\begin{equation} \label{eq:_in_pro}
    \psi(YY^*) = \sqrt{2} \psi(Y) \psi(Y)^\top. 
\end{equation}
The next lemma, whose proof is in the Appendix, allows us to fully rewrite the posterior density in terms of real-valued matrices.

\begin{lemma} \label{lem:ComplexToReal}
For all $M,N \in \C^{d \times d}$ Hermitian positive semidefinite, there holds $\tr(MN) =  \tr(\psi(M) \psi(N))$ and $\det(M) = \sqrt{2^d \det(\psi(M))}$.
\end{lemma}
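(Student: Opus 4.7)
The plan is to expand both quantities by writing $M = M^R + iM^I$ and $N = N^R + iN^I$ with $M^R, N^R \in \R^{d \times d}$ symmetric and $M^I, N^I \in \R^{d \times d}$ skew-symmetric (these symmetry properties follow from $M$ and $N$ being Hermitian).

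For the trace identity, I first multiply the two block matrices defining $\psi(M)\psi(N)$ directly. The $\frac{1}{\sqrt{2}}$ scalings combine to give a prefactor $\frac{1}{2}$, and the diagonal blocks both equal $M^R N^R - M^I N^I$, so summing their traces yields
\[ \tr(\psi(M)\psi(N)) = \tr(M^R N^R) - \tr(M^I N^I). \]
On the complex side, expanding gives
\[ \tr(MN) = \tr(M^R N^R) - \tr(M^I N^I) + i \bigl(\tr(M^R N^I) + \tr(M^I N^R)\bigr). \]
The remaining step is to show that the cross terms vanish: using $\tr(AB) = \tr((AB)^\top) = \tr(B^\top A^\top)$ together with $(M^R)^\top = M^R$ and $(N^I)^\top = -N^I$ gives $\tr(M^R N^I) = -\tr(M^R N^I) = 0$, and symmetrically $\tr(M^I N^R)=0$. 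The two expressions then coincide.

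For the determinant identity, I would invoke the classical fact that
\[ \det \begin{pmatrix} M^R & -M^I \\ M^I & M^R \end{pmatrix} = |\det(M)|^2, \]
which, if desired, can be proved by observing that if $v \in \C^d$ satisfies $Mv = \lambda v$ then $\bigl(\begin{smallmatrix}v \\ -iv\end{smallmatrix}\bigr)$ is a (complex) eigenvector of the block matrix with eigenvalue $\lambda$, while $\bigl(\begin{smallmatrix}v \\ iv\end{smallmatrix}\bigr)$ gives an eigenvalue $\bar{\lambda}$; hence the $2d$ eigenvalues of the block matrix are the eigenvalues of $M$ together with their conjugates, yielding $\det = |\det(M)|^2$. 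Since $\psi(M)$ scales the $2d \times 2d$ block by $1/\sqrt{2}$, we obtain $\det(\psi(M)) = 2^{-d} |\det(M)|^2$. Hermitian positive semidefiniteness of $M$ ensures $\det(M) \ge 0$ is real, so taking square roots gives $\det(M) = \sqrt{2^d \det(\psi(M))}$.

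The only mild obstacle is the determinant identity for the unscaled block matrix, which is well known but requires a short eigenvector argument; beyond that, both claims reduce to direct block manipulations and elementary trace identities between symmetric and skew-symmetric real matrices.
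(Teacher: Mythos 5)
Your trace identity is proved exactly as in the paper: expand the block product, observe the two diagonal blocks each equal $M^RN^R - M^IN^I$, and kill the cross terms $\tr(M^RN^I)$, $\tr(M^IN^R)$ using the symmetric/skew-symmetric pairing. For the determinant, you take a genuinely different route. The paper stays entirely over the reals: for each (real) eigenvalue $\lambda$ of the Hermitian matrix $M$ with eigenvector $v = v^R + iv^I$, it exhibits the two orthogonal real eigenvectors $(v^R, v^I)$ and $(v^I, -v^R)$ of $\psi(M)$ with eigenvalue $\lambda/\sqrt{2}$, so each eigenvalue doubles in multiplicity and $\det(\psi(M)) = \prod_i (\lambda_i/\sqrt{2})^2 = 2^{-d}\det(M)^2$ directly. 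You instead invoke the classical identity $\det\bigl(\begin{smallmatrix} M^R & -M^I \\ M^I & M^R \end{smallmatrix}\bigr) = |\det(M)|^2$ and then use positive semidefiniteness to drop the modulus; this is valid and arguably cleaner in that it isolates a standard fact, whereas the paper's version is self-contained and exploits Hermitian-ness from the start. One small slip in your parenthetical justification: the eigenvector of the block matrix with eigenvalue $\bar{\lambda}$ is $\bigl(\begin{smallmatrix}\bar v \\ i\bar v\end{smallmatrix}\bigr)$, not $\bigl(\begin{smallmatrix}v \\ iv\end{smallmatrix}\bigr)$ (one checks $B\bigl(\begin{smallmatrix}v \\ iv\end{smallmatrix}\bigr) = \bigl(\begin{smallmatrix}\bar M v \\ i\bar M v\end{smallmatrix}\bigr)$, and $\bar M v \neq \bar\lambda v$ in general, even for Hermitian $M$); the fix is immediate and does not affect the conclusion.
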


Instead of sampling the posterior density $\hat \nu_{\lambda,\theta}^{\mathrm{BM}}$ over the set of complex $d \times r$ matrices, we sample the posterior $\tilde \nu_{\lambda,\theta}^{\mathrm{BM}}(\tilde Y) := \hat \nu_{\lambda,\theta}^{\mathrm{BM}}(\psi^{-1}(\tilde Y))$, for $\tilde Y = \psi(Y) \in \R^{2d \times 2r}$ the real-valued representation of $Y$ through the isomorphism $\psi$. This new posterior can be further written as 
\[ \tilde \nu_{\lambda,\theta}^{\mathrm{BM}}(\tilde Y) = \exp\left(-\lambda \sum_{s=1}^{2^n} \sum_{a=1}^{3^n} \left( \hat p_{a,s} - \sqrt{2} \tr(\tilde \calP_s^a \cdot \tilde Y\tilde Y^\top) \right)^2 \right) \tilde \nu_{\theta}(\tilde Y), \]
where the additional factor $\sqrt{2}$ results from \eqref{eq:_in_pro}, and where $\tilde \nu_{\theta}(\tilde Y) := \nu_{\theta}(\psi^{-1}(\tilde Y))$. The negative log-density of the posterior is given by
\begin{align*} \label{eq:posterior_real}
 \tilde f_{\lambda,\theta}(\tilde Y) &= - \log(\tilde \nu_{\lambda,\theta}^{\mathrm{BM}}(\tilde Y)) \\
 &= \lambda  \sum_{s=1}^{2^n} \sum_{a=1}^{3^n} \left( \hat p_{a,s} - \sqrt{2} \tr(\tilde \calP_s^a \cdot \tilde Y\tilde Y^\top) \right)^2 + \frac{2d+r+2}{4} \log \det \left(\frac{\theta^2}{\sqrt{2}} I_{2d} + \sqrt{2} \tilde Y\tilde Y^\top\right) + \hat C, 
\end{align*}
with $\hat C = -\log(C_{\theta}) + (2d+r+2)d \log(2)/4$, where $C_\theta$ is the normalization constant of the prior \eqref{eq:prior}. The gradient of this function is obtained as:
\[ \nabla \tilde f_{\lambda,\theta}(\tilde Y) = -2 \sqrt{2} \lambda \sum_{s=1}^{2^n} \sum_{a=1}^{3^n}  (\hat p_{a,s} - \sqrt{2} \tr(\tilde \calP_a^s \cdot \tilde Y\tilde Y^\top)) (\tilde \calP_a^s+\tilde \calP_a^{s\top}) \tilde Y + \frac{2d+r+2}{\theta^2} \left( I_{2d} + \frac{2}{\theta^2}\tilde Y\tilde Y^\top\right)^{-1} \tilde Y, \]
where the derivative of the determinant was computed using \cite[Result 10.3.3 (1)]{Lutkepohl1996}. According to the Sherman-Morrison-Woodbury formula \cite[Sec B.10]{Higham2008}, the matrix inverse can be written as 
\[ \left( I_{2d} + \frac{2}{\theta^2} \tilde Y\tilde Y^\top\right)^{-1} = I_{2d} - \tilde Y \left(\frac{\theta^2}{2}I_{2r} + \tilde Y^\top \tilde Y\right)^{-1}\tilde Y^\top, \]
which is substantially cheaper to evaluate in the case $r \ll d$ (recalling that $r = 1$ for the recovery of a pure state).

\Cref{algo:sampler} summarizes the computation of our proposed \emph{Burer-Monteiro estimator} (BM-estimator in short). Note that this algorithm does not impose the unit-trace constraint on the estimator during sampling; instead, the trace is only normalized before termination. This is motivated by the observation that the trace of the density estimator typically remains close to one during the sampling process. 

\begin{algorithm}[H]
\begin{algorithmic}[1]
\State \textbf{Input:} rank estimate $r$, initial point $Y_0 \in \C^{d \times r}$, hyperparameters $\eta$, $\beta$, $\theta$, $\lambda$, $n_{\mathrm{burnin}}$.
\State Compute $\tilde Y_0 := \psi(Y_0)$
\For{$k \geq 1$ until the termination criterion is satisfied}
\State Sample the entries of $w_k^R, w_k^I \in \R^{d \times r}$ i.i.d. at random according to the standard Gaussian distribution, and let $w_k := w_k^R + i w_k^I$, and $\tilde w_k := \psi(w_k)$.  
\State $\tilde Y_k = \tilde Y_{k-1} - \eta \nabla \tilde f_{\lambda,\theta}(\tilde Y_{k-1}) + \frac{\sqrt{2 \eta}}{\beta} \tilde w_k $
\EndFor
\State Let $\hat \rho = \frac{1}{k-n_{\mathrm{burnin}}} \sum_{i = 1}^{k-n_\text{burnin}} \psi^{-1}(\tilde Y_{k-i+1}) \psi^{-1}(\tilde Y_{k-i+1})^* $ 
\State \textbf{Return:} the estimator $\frac{1}{\tr(\hat \rho)} \hat \rho$.
\end{algorithmic}
\caption{BM-estimator for quantum tomography}
\label{algo:sampler}
\end{algorithm}

We next compare our proposed Langevin sampling algorithm with the prob-estimator proposed in \cite{Mai2017} on a collection of synthetic data.

\paragraph{Data generation:} Similarly to \cite{Mai2017}, we assume the target matrix $\rho$ to be of one of the following forms.
\begin{enumerate}[]
    \item A pure state density $\rho_{\mathrm{rank \; 1}}^0 = v v^*$ for some $v \in \mathbb{C}^d$.
    \item A rank-2 density $\rho_{\mathrm{rank \; 2}}^0 = \frac{1}{2}v_1 v_1^* + \frac{1}{2}v_2 v_2^*$, where $v_1, v_2$ are two normalized orthogonal vectors in $\mathbb{C}^{d}$.  \label{rank2}
    \item An approximate rank-2 density $\rho_{\mathrm{approx \; rank \; 2}}^0 = w \rho_{\mathrm{rank \; 2}}^0 + (1 - w) I_d/d$, $w = 0.98$, where $\rho^0_{\mathrm{rank \; 2}}$ was generated as above.
    \item A maximal mixed state (rank $d$).
\end{enumerate}

We focus on the complete measurement setting, in which the number of experiments is given by $4^n$, corresponding for each qubit to the three Pauli observables and the identity operator. We depart from the per qubit measurement setting described in previous sections, and consider the more general setting where the system is measured as a whole: for experiment $a \in \{1, \dots, 4^n \}$, only a single scalar is available, instead of an $n$-dimensional vector as described in our theoretical results that follow the setting used in \cite{Mai2017}. The outcome of an experiment is thus simulated using a binomial random variable $b \sim B(m,p)$, where $p$ is the probability to observe the (system) state considered, quantified theoretically by Born's rule, and $m$ is the number of (virtual) replications of the experiment.

\paragraph{Algorithms considered:} We compare the prob-estimator presented in \cite{Mai2017} with two variants of our proposed BM estimator. These two variants aim to account for the situation where the rank of the target density matrix is known or not. In the first case, our BM estimator is obtained by running \Cref{algo:sampler} with $r = 1, 2, 2$ for respectively a rank-1, rank-2, and approximately rank-2 target density matrix. In the second case, to reflect the situation where the rank of the target density matrix is unknown, we let $r = d$ for each type of target density.
Unless stated otherwise, we rely on the following hyperparameter values: $\eta = 10^{-5}$, $\beta = 10^3$, $\lambda = m/2$, $\theta \in \{0.1,100\}$, where the smallest value is chosen when the rank is unknown, to promote low-rankness in the prior density, and the second is chosen when $r$ is expected to be the target rank, in which case the prior density is closer to a uniform distribution across ranks.  Each algorithm is run for $10^4$ iterations. As suggested in \cite{Mai2017}, a burnin phase (by default of 2000 iterations) is used for each algorithm. We initialize our BM-estimators at some initial point $Y_0 = V D^{1/2}$, where $V \in \C^{d \times r}$ is a unitary matrix sampled uniformly at random from the Stiefel manifold (Haar distribution), and $D \in \R^{r \times r}$ is a diagonal matrix whose entries follow a Dirichlet distribution with parameter $1/r$. 

\paragraph{Error measure:} We use the distance (measured in terms of the Frobenius norm) between the estimator $\hat \rho$ obtained and the true density matrix $\rho^0$.

 \subsection{Comparison of our proposed BM-estimators with the prob-estimator}

\begin{figure*}[t!]
    \centering
    \begin{subfigure}[t]{0.45\textwidth}
        \centering
        \includegraphics[scale = 0.5,trim = 5 10 0 0, clip]{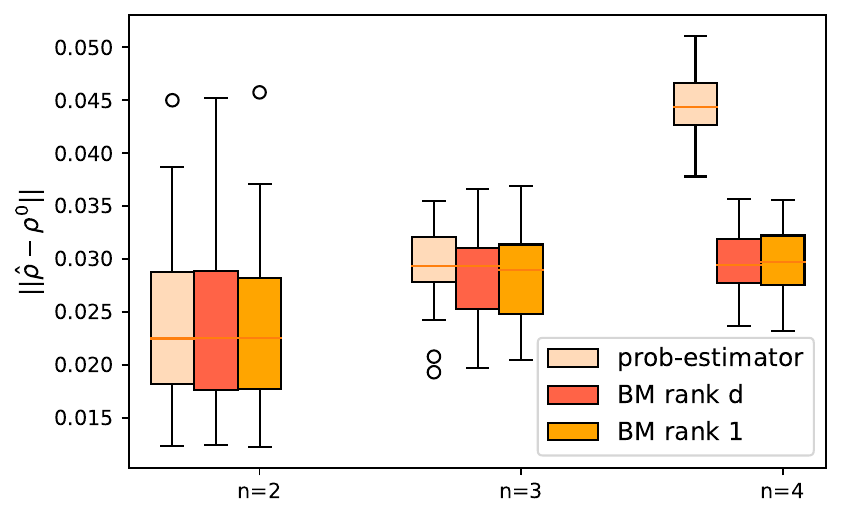}
        \caption{Rank-1 density matrix}
    \end{subfigure}%
    ~ 
    \begin{subfigure}[t]{0.45\textwidth}
        \centering
        \includegraphics[scale = 0.5,trim = 5 10 0 0, clip]{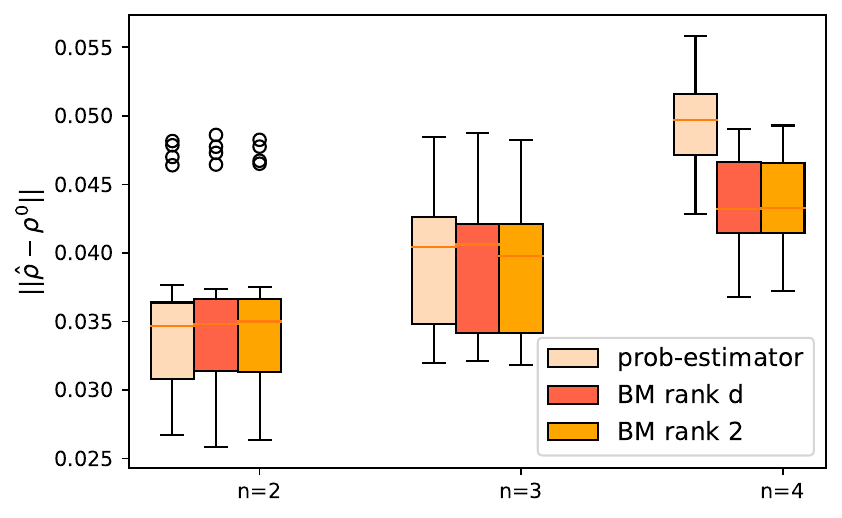}
        \caption{Rank-2 density matrix}
    \end{subfigure}
    ~
    \begin{subfigure}[t]{0.45\textwidth}
        \centering
        \includegraphics[scale = 0.5,trim = 5 10 0 0, clip]{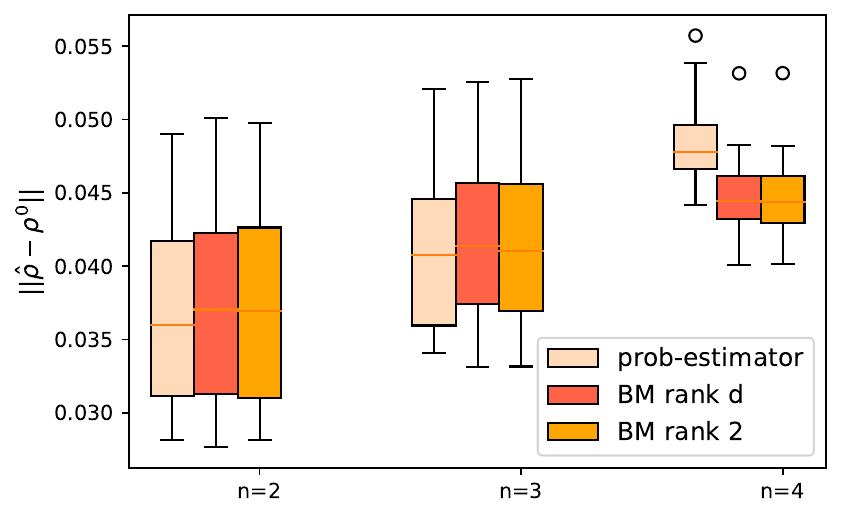}
        \caption{Approx. rank-2 density matrix}
    \end{subfigure}
    ~
    \begin{subfigure}[t]{0.45\textwidth}
    \centering
    \includegraphics[scale = 0.5,trim = 5 10 0 0, clip]{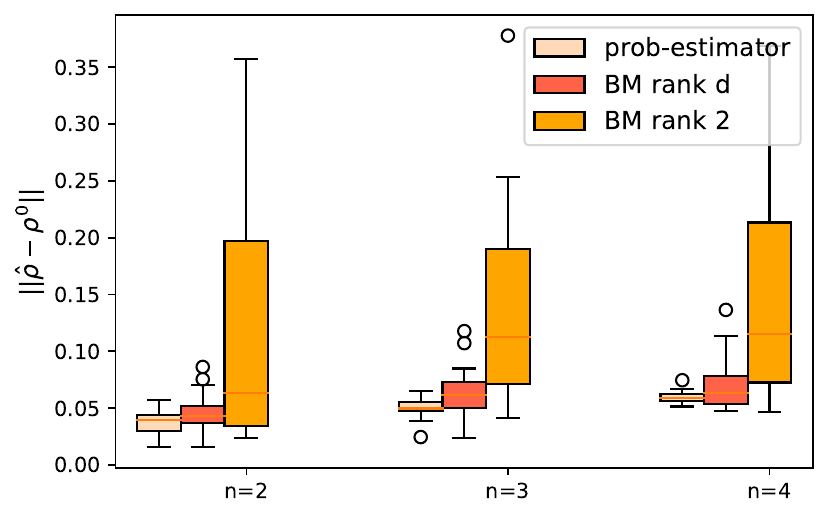}
    \caption{Rank-$d$ density matrix}
    \end{subfigure}
    \caption{Comparison of the final accuracies of the different estimators considered.}
    \label{ref:boxplots}
\end{figure*}

\Cref{ref:boxplots} compares two variants of our proposed BM-estimator with the prob-estimator derived in \cite{Mai2017}, in terms of accuracy, for different problem dimensions and low-rankness of the target density matrix. It shows that the final accuracy of the prob-estimator proposed in \cite{Mai2017} is in general comparable to the final accuracy of our BM estimators. For large matrices ($n = 4$), our proposed estimators substantially outperform the prob-estimator, both when the rank of the target matrix is known and when it is not. The fact that our BM estimators achieve the same accuracy regardless of the knowledge of the rank of the target density matrix supports the ability of our proposed prior to promote low-rankness. As a sanity check, we also displayed the results obtained by our BM estimator with $r$ significantly lower than the rank of the target density matrix, resulting as expected in a low accuracy; see the bottom-right plot in \Cref{ref:boxplots}, where a BM estimator with $r = 2$ was computed for a full-rank target density matrix. 

Beyond final accuracy, we compare in \Cref{fig:typical_run} the convergence of the Markov chains generated by each algorithm:  we display in \Cref{fig:typical_run} the evolution of the distance of each estimator to the target density matrix during a typical run of the algorithm (we only display the 2000 steps of each algorithm, with a burnin phase duration chosen here to be 800). This figure shows that the Markov chains associated with our BM-estimators converge much faster than the one associated with the prob-estimator proposed in \cite{Mai2017}. As indicated by \Cref{tab:comp_time}, the computational cost of generating a new sample of the Markov chain is also highly variable across the algorithms. For small dimensions ($n = 2$ or $n = 3$), the BM approach is faster than the prob-estimator of \cite{Mai2017}. On the other hand, for higher-dimensional problems, the BM approach with no rank information (i.e., $r = d$) becomes more costly than the prob-estimator; arguably due to the need to compute the gradient of the logarithm of the prior that involves a matrix inverse. This issue is mitigated for small values of $r$, by exploiting the Sherman-Morrison-Woodbury formula described in the previous section, that replaces the inverse of a $d \times d$ matrix of a rank-$r$ perturbation of the identity by the inverse of a $r \times r$ matrix. The use of this formula substantially ensures the scalability of the BM-estimators with problem dimension, resulting for $n = 5$ in a computational cost to generate a sample of the Markov chain that is about one order of magnitude below the one of the prob-estimator.

\begin{figure*}[t!]
    \centering
    \begin{subfigure}[t]{0.45\textwidth}
        \centering
        \includegraphics[scale = 0.5,trim = 5 10 0 0, clip]{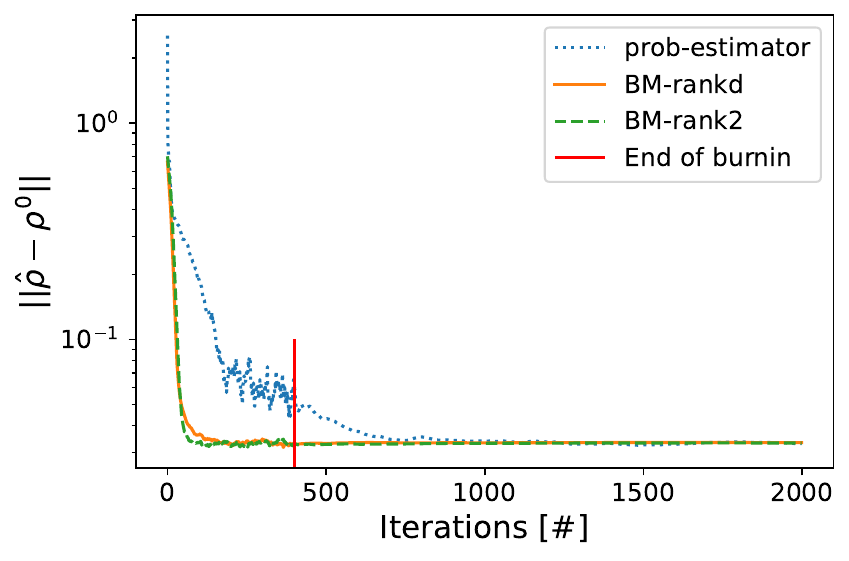}
        \caption{$n = 2$}
    \end{subfigure}%
    ~ 
    \begin{subfigure}[t]{0.45\textwidth}
        \centering
        \includegraphics[scale = 0.5,trim = 5 10 0 0, clip]{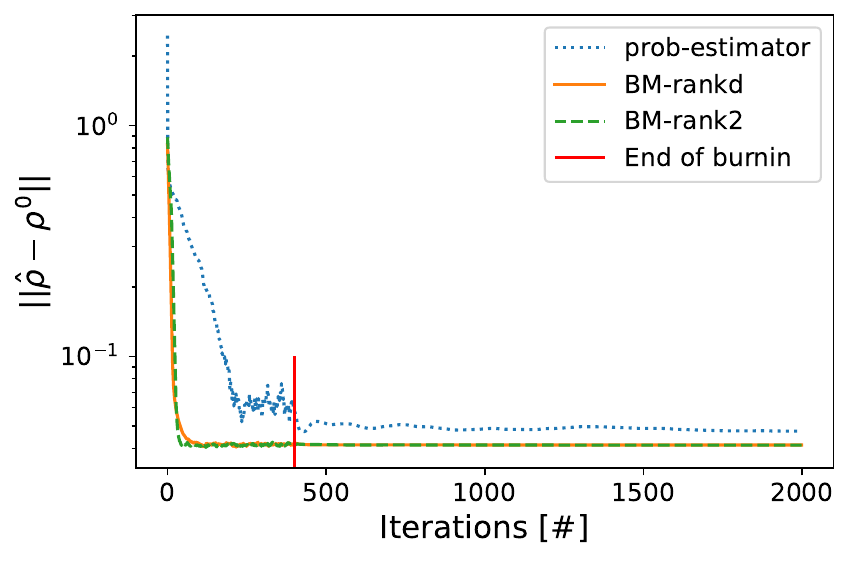}
        \caption{$n = 3$}
    \end{subfigure}
    ~ 
    \begin{subfigure}[t]{0.45\textwidth}
        \centering
        \includegraphics[scale = 0.5,trim = 5 10 0 0, clip]{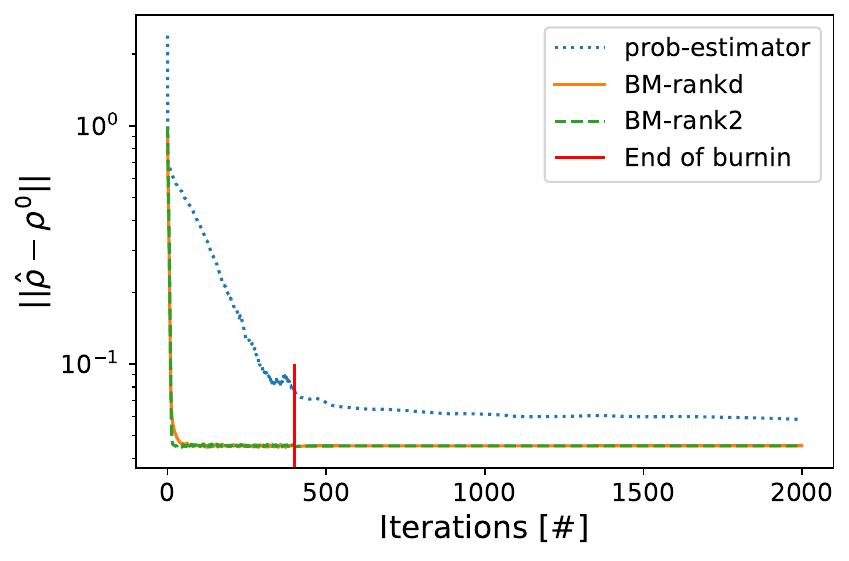}
        \caption{$n = 4$}
    \end{subfigure}
    ~
    \begin{subfigure}[t]{0.45\textwidth}
        \centering
        \includegraphics[scale = 0.5,trim = 5 10 0 0, clip]{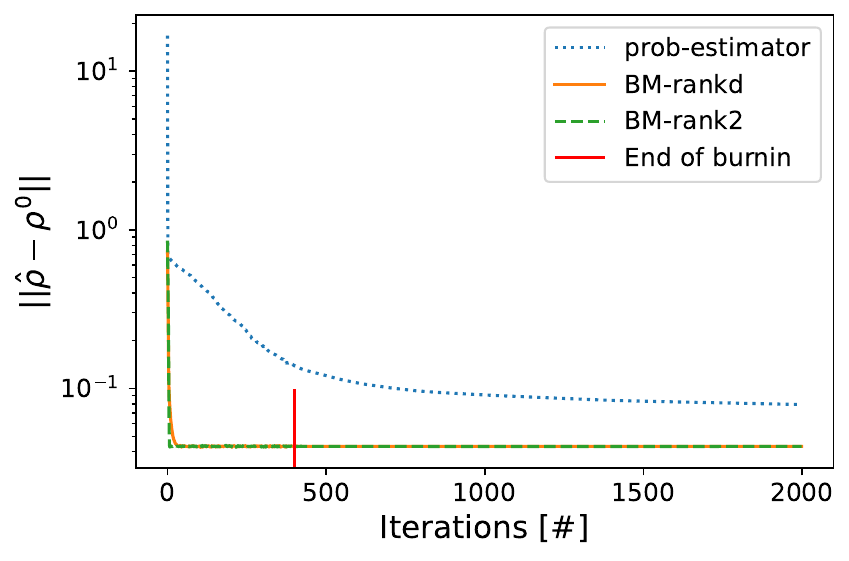}
        \caption{$n = 5$}
    \end{subfigure}
    \caption{Convergence of the Markov chain generated by each sampling algorithm, for a rank-2 target density matrix of different dimensions. }
    \label{fig:typical_run}
\end{figure*}

\begin{table}[t!]
    \centering
    \begin{tabular}{c|c|c|c}
         & prob-estimator &  BM-rank 2 & BM-rank $d$\\ 
         \hline
        $n=2$ & $5.7$ & $1.3$ & $1.6$ \\
        $n=3$ & $ 12 $  & $1.7$ & $3$\\
        $n=4$ & $124.8$ & $37$ & $198.7$ \\
        $n=5$ & $ 2040$ & $319$ & $3593$\\
    \end{tabular}
    \caption{Computation time per iteration ($\times 10^{-4}\;\mathrm{s}$)}
    \label{tab:comp_time}
\end{table}

Finally, \Cref{fig:sampler} displays the performance of our BM-estimator when varying the number of replications $m$ of each experiment, for $n = 3$, target density matrix $\rho^0_{\mathrm{rank 2}}$, $\lambda = m/2$, $\theta = 10^2$. Each point on \Cref{fig:sampler} was obtained as an average of the result of \Cref{algo:sampler} for 10 random seeds. We see in \Cref{fig:sampler} that the error decreases with the sampling size, with a rate $1/m$, in accordance with the theoretical analysis of the last section; indeed, the orange line in \Cref{fig:sampler} is the line obtained by linear regression of all points of the curve (except for the two first), and has a slope equal to -0.99. 

\begin{figure}
    \centering
    \includegraphics[scale = 0.7]{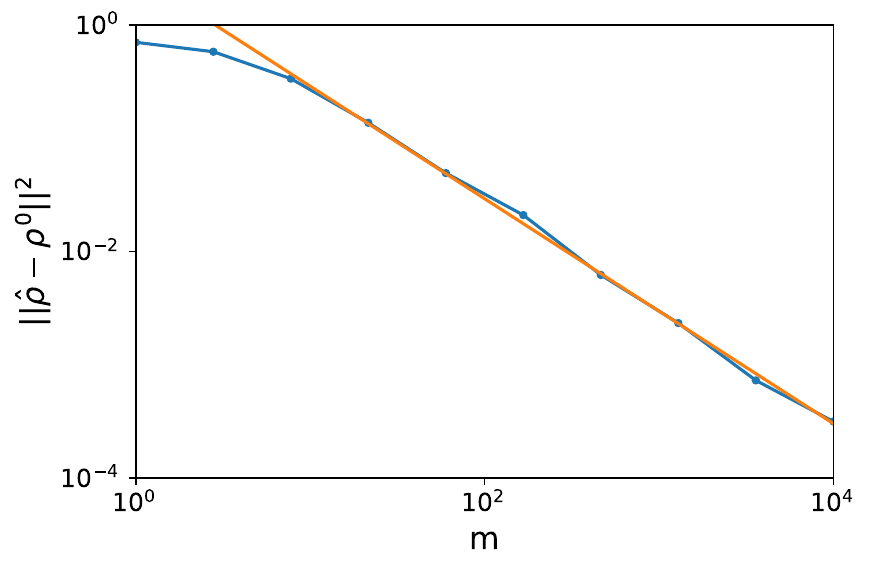}
    \caption{Relative error achieved by \Cref{algo:sampler} when increasing $m$, the number of repetition of each experiment (blue). The orange line is the result of a linear regression of all points on the curve (omitting the first two). }
    \label{fig:sampler}
\end{figure}

\section{Conclusions}
We propose a new estimator for quantum tomography that is efficiently computed using Langevin sampling, exploiting the differentiability of the associated log-posterior density. Our estimator relies on the Burer-Monteiro factorization of the density matrices, which naturally allows the incorporation of prior rank knowledge. When the rank of the target density is known (e.g., one is measuring a pure state), this factorization drastically improves the scalability of our proposed estimator compared to alternative estimators in the literature; a feature that is especially appealing given the exponential increase of problem dimension with the number of qubits in the system. When the rank of the target density matrix is unknown but only an upper bound is available, this upper bound can be used to reduce the computational burden per iteration of our Langevin sampling algorithm. Meanwhile, our proposed prior density, which is a direct extension to the complex setting of the one proposed in \cite{Dalalyan2020}, promotes further the low-rankness of the iterates. 
Note that, as the posterior is not log-concave due to the choice of the prior, we have no theoretical guarantees on the convergence of our Langevin sampler. However, our numerical experiments show that our Langevin sampler performs well in practice, in particular when the rank of the density matrix is known to be small, corresponding to low-dimensional Burer-Monteiro factors, and we also observe a substantial improvement in scalability compared to the approach proposed in \cite{Mai2017}. A similar prior which leads to a posterior density that is not log-concave was also used in \cite{Dalalyan2020} in a different setting, where good numerical performance was also demonstrated. 

\section*{Acknowledgments}

This work was funded in part by the UK Government's Department for Science, Innovation and Technology (DSIT) through the UK's National Measurement System (NMS) programme and the National Quantum Technologies Programme.  This work was initiated and partly undertaken while E. Massart was a postdoctoral researcher at the Mathematical Institute, University of Oxford, funded by the National Physical Laboratory. Part of the implementations used for the numerical results presented in this paper were done by Danila Mokeev as part of his MSc thesis at UCLouvain. 

\bibliographystyle{alpha}
{\footnotesize
	\bibliography{biblio_proj_Lang}}

\section{Appendix}

\subsection{Proof of Lemma 1}
 \begin{proof}
 This proof is a direct extension to the complex setting of the one given in \cite[Lemma 1]{Dalalyan2020}. For any bounded and measurable function $h : \C^d \to \R$, there holds
    \begin{align*}
        \int_{\C^{d \times r}} h(y_1) \nu_\theta(Y) \mathrm{d}Y &= C_\theta \int_{\C^{d \times r}} \frac{h(y_1)}{\det(\theta^2 I_d + YY^*)^{(2d+r+2)/2}} \mathrm{d}Y \\
        &= \tilde C_{\theta} \int_{\C^{d \times r}} \frac{h(\theta z_1)}{\det(I_d + ZZ^*)^{(2d+r+2)/2}} \mathrm{d}Z,
    \end{align*} 
    where $y_1 \in \C^d$ is the first column of $Y$, and where, for some $\tilde C_{\theta}$, the second equality follows from applying the change of variable $Z := Y/\theta$.
    We then make a second change of variable and write $x$ for the first column of $Z$, and  $Z = [x, Z_{2:r}] = [x, (I_d+xx^*)^{1/2} \bar Z_{2:r}]$.
    This yields 
    \begin{align*}
        \mathrm{d}Z = \mathrm{d}x \mathrm{d}Z_{2:r} &= \mathrm{d}x \det(I_d+xx^*)^{(r-1)/2} \mathrm{d} \bar Z_{2:r} \\
        &= (1+ \|x \|^2)^{(r-1)/2} \mathrm{d}x \mathrm{d} \bar Z_{2:r}
    \end{align*}  
    and 
    \begin{align*}
    \det(I_d+ZZ^*) &= \det(I_d + xx^* + Z_{2:r} Z_{2:r}^*) \\
    &= \det (I_d + xx^* + (I_d + xx^*)^{1/2} \bar Z_{2:r} \bar Z_{2:r}^* (I_d + xx^*)^{1/2}) \\
    &= \det ((I_d + xx^*)^{1/2} (I_d + \bar Z_{2:r} \bar Z_{2:r}^*) (I_d + xx^*)^{1/2}) \\
    &= \det (I_d + xx^*) \det (I_d + \bar Z_{2:r} \bar Z_{2:r}^*) \\
    &= (1 + \| x\|^2)  \det (I_d + \bar Z_{2:r} \bar Z_{2:r}^*).
    \end{align*}
    We then get 
    \begin{align*}
        \int_{\C^{d \times r}} h(y_1) \nu_\theta(Y) \rd dY &= \tilde C_{\theta}\int_{\C^{d \times r}}  h( \theta x) (1+ \|x\|^2)^{(r-1)/2} [(1 + \| x\|^2) \det (I_d + \bar Z_{2:r} \bar Z_{2:r}^*)]^{-(2d+r+2)/2} \rd dx  \rd \bar Z_{2:r}  \\
        &= \tilde C_{\theta}\int_{\C^{d \times r}} h( \theta x) (1+ \|x \|^2)^{-(2d+3)/2}  \det (I_d + \bar Z_{2:r} \bar Z_{2:r}^*)^{-(2d+r+2)/2} \rd x  \rd \bar Z_{2:r}.  
    \end{align*} 
Writing the normalization constant as
\[  \tilde{C}_\theta = \left(\int_{\C^{d \times r}} (1+ \|x \|^2)^{-(2d+3)/2}  \det (I_d + \bar Z_{2:r} \bar Z_{2:r}^*)^{-(2d+r+2)/2} \rd x  \rd \bar Z_{2:r}\right)^{-1} \]
gives 
 \begin{align*}
        \int_{\C^{d \times r}} h(y_1) \nu_\theta(Y) \rd Y &= \frac{\int_{\C^{d}} h( \theta x) (1+ \|x\|^2)^{-(2d+3)/2} \rd x}{\int_{\C^{d}} (1+ \|x \|^2)^{-(2d+3)/2} \rd x} \\
        &= \frac{\int_{\C^{d}} h(\sqrt{2/3} \theta y) (1+ 2\|y\|^2/3)^{-(2d+3)/2} \rd y}{\int_{\C^{d}} (1+ 2\|y \|^2/3)^{-(2d+3)/2} \rd y}, 
\end{align*}
where we recognize in the last expression the complex multivariate $t_3$-distribution with location $0$ and scale matrix $\Sigma = I_d$ \cite{Ollila2012}, whose p.d.f $\mu$ is proportional to $\mu(y) \propto (1 + 2 \|y\|^2/3)^{-(2d+3)/2} \rd y$. 

As the covariance matrix of the complex multivariate $t_p$-distribution is $\frac{p}{p-2} I_d$, it follows that 
\[ \int_{\C^{d \times r}} \|Y \|_\F^2 \nu_\theta(Y) \rd Y = \int_{\C^{d \times r}} \left(\sum_{i = 1}^r \|y_i \|^2 \right) \nu_\theta(Y) \rd Y = \sum_{i = 1}^r \int_{\C^{d \times r}} \|y_i \|^2 \nu_\theta(Y) \rd Y = \frac{2r}{3} \theta^2 \int_{\C^d} \|y\|^2 \mu(y) \rd y, \]
so that 
\[ \int_{\C^{d \times r}} \|Y \|_\F^2 \nu_\theta(Y) \rd Y = 2rd \theta^2.\]
    \end{proof}

\subsection{Proof of Theorem 1}

 We write $P^0$, $P(\rho)$ and $\hat P \in [0,1]^{3^n \times 2^n}$ for the matrices defined as:
\[ [P^0]_{a,s} = p^0_{a,s} = \tr(\calP_s^a \cdot Y^0 Y^{0*}), \]
the matrix that quantifies the probability of experiment $a$ to result in outcome vector $o(s)$, for the system associated to the target density $\rho^0 = Y^0 Y^{0*}$  (see the notation introduced in \Cref{sec:gen_framework}), 
\[ [P(YY^*)]_{a,s} = \tr(\calP_s^a \cdot Y Y^{*}), \]
the matrix that quantifies the probability of experiment $a$ to result in outcome $o(s)$, for the system associated to an arbitrary density density $\rho = Y Y^{*}$, and 
\[ [\hat P]_{a,s} = \hat p_{a,s}, \]
the matrix of empirical frequencies \eqref{eq:empirical_freq}, obtained by letting $m_a = m$ for all $a$, i.e., the same number of replications is used for each experiment $a \in \{1, \dots, 3^n\}$. With this notation, our likelihood \eqref{eq:likelihood} can be written as 
\[  L^{\mathrm{BM}}: \C^{d \times r} \to \R : Y  \mapsto  \| \hat P - P(YY^*)\|^2_\F, \]
and the associated posterior distribution \eqref{eq:BM_posterior} can be written as
 $\hat \nu_{\lambda,\theta}^{\mathrm{BM}}(Y) = e^{-f_{\lambda,\theta}(Y)}$, with, according to the prior definition \eqref{eq:prior},
\begin{equation} \label{eq:posterior-complex}
    f_{\lambda,\theta}(Y) = \lambda L^{\mathrm{BM}}(Y) + \frac{2d+r+2}{2} \log \det(\theta^2 I_d + YY^*) +  C,
\end{equation}
and $C = -\log C_{\theta}$, where $C_{\theta}$ is the spectral scaled Student's $t$-distribution normalizing constant. We first recall the following lemma. 
\begin{lemma}\cite[Lemma 3]{Mai2017} \label{Lem:MaiLem3}
    For $\lambda > 0$, there holds for all $Y \in \C^{d \times r}$
    \begin{align}
        \E \exp \left\{ \lambda ( \|P(YY^*) - \hat P \|^2_\F - \| P^0 - \hat P \|^2_\F) - \lambda \left(1 + \frac{\lambda}{m} \right) \|P^0 - P(YY^*) \|^2_\F \right\} \leq 1 \\
        \E \exp \left\{ \lambda \left(1 - \frac{\lambda}{m} \right) \|P^0 - P(YY^*) \|^2_\F  - \lambda (\|P(YY^*) - \hat P \|^2_\F - \| P^0 - \hat P \|^2_\F) \right\} \leq 1,
        \end{align}
        where the expectations are over the random variables $R_i^a$ in $\hat P$, i.e., over the results of the $m$ replications of each experiment.
\end{lemma}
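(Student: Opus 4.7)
The plan is to follow the standard PAC-Bayesian template used by Catoni and later by Mai--Alquier, combining the two exponential moment inequalities of Lemma~3 with a carefully chosen data-free test distribution that is a translation of the prior $\nu_\theta$. As a preliminary step, since $\hat\rho_{\lambda,\theta}^{\mathrm{BM}} = \int YY^* \, \mathrm{d}\hat\nu_{\lambda,\theta}^{\mathrm{BM}}(Y)$, Jensen's inequality applied to the convex map $\|\cdot\|_\F^2$ gives
\[
\|\hat\rho_{\lambda,\theta}^{\mathrm{BM}} - \rho^0\|_\F^2 \le \int \|YY^* - Y^0 Y^{0*}\|_\F^2 \, \mathrm{d}\hat\nu_{\lambda,\theta}^{\mathrm{BM}}(Y),
\]
so it suffices to bound the right-hand side. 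To connect the ``data'' quantity $\|P(YY^*) - P^0\|_\F^2$ appearing in Lemma~3 to the ``estimation'' quantity $\|YY^* - Y^0Y^{0*}\|_\F^2$, I would expand the Pauli POVM elements $\calP_s^a = 2^{-n}\prod_k (I + s_k\sigma_{a_k})$ in the Pauli-string basis, obtaining the identity $\|P(M)\|_\F^2 = 2^{-n}\sum_b 3^{n-|S(b)|}\tr(P_b M)^2$; this yields the two-sided informational-completeness estimate $\|M\|_\F^2 \le \|P(M)\|_\F^2 \le 3^n \|M\|_\F^2$, which is the bridge producing both the $3^n/2^n$ prefactor of the KL term and the $3^n$ prefactor that will need to be absorbed into the oracle term.

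I would then apply the PAC-Bayesian machinery. Fix an arbitrary probability measure $\rho^* \ll \nu_\theta$ on $\C^{d\times r}$, integrate each bound of Lemma~3 against $\nu_\theta$, apply Fubini and Markov at level $\epsilon/2$, and then apply the Donsker--Varadhan variational identity $\int g \, \mathrm{d}\rho^* \le \log \int e^g\, \mathrm{d}\nu_\theta + \KL(\rho^*,\nu_\theta)$. This turns the first moment bound (used with $\rho^*$) and the second moment bound (used with the posterior $\hat\nu_{\lambda,\theta}^{\mathrm{BM}}$) into a pair of PAC-Bayesian inequalities valid simultaneously with probability $\ge 1-\epsilon$. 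Combining these with the Gibbs variational characterization
\[
\lambda \int L^{\mathrm{BM}} \,\mathrm{d}\hat\nu_{\lambda,\theta}^{\mathrm{BM}} + \KL(\hat\nu_{\lambda,\theta}^{\mathrm{BM}},\nu_\theta) \le \lambda \int L^{\mathrm{BM}} \,\mathrm{d}\rho^* + \KL(\rho^*,\nu_\theta)
\]
yields a Catoni-style oracle inequality
\[
(1-\lambda/m) \int \|P^0 - P(YY^*)\|_\F^2 \,\mathrm{d}\hat\nu_{\lambda,\theta}^{\mathrm{BM}} \le (1+\lambda/m) \int \|P^0 - P(YY^*)\|_\F^2 \,\mathrm{d}\rho^* + \tfrac{2}{\lambda}\!\left(\KL(\rho^*,\nu_\theta) + \log(2/\epsilon)\right).
\]
The specific choice $\lambda = 3m/8$ fixes $(1+\lambda/m)/(1-\lambda/m) = 11/5$ and produces explicit numerical constants that, after multiplying by the conversion factors from the Pauli step, match those announced in the theorem.

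The third ingredient is the test distribution. Following Dalalyan's approach, I would take $\rho^*(Y) \propto \det(\theta^2 I_d + (Y-\bar Y)(Y-\bar Y)^*)^{-(2d+r+2)/2}$, the prior shifted so that $Y - \bar Y \sim \nu_\theta$. Writing $A = \theta^2 I_d + (Y'+\bar Y)(Y'+\bar Y)^*$ and $B = \theta^2 I_d + Y'Y'^*$, the KL divergence becomes
\[
\KL(\rho^*,\nu_\theta) = \tfrac{2d+r+2}{2}\, \E_{Y' \sim \nu_\theta}\!\left[\log \det\!\bigl(I + B^{-1/2}(A-B)B^{-1/2}\bigr)\right].
\]
The Hermitian matrix $A-B = \bar Y(Y'+\bar Y)^* + Y'\bar Y^*$ has rank at most $2p$ because its image lies in $\mathrm{colsp}(\bar Y) + \mathrm{colsp}(Y'\bar Y^*)$, each of dimension at most $p$. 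Hence $B^{-1/2}(A-B)B^{-1/2}$ has rank at most $2p$, and using the operator-norm bound $\|B^{-1/2}Y'\|_2 \le 1$ (which follows from $Y'Y'^* \preceq B$) together with $\|B^{-1/2}\bar Y\|_2 \le \|\bar Y\|_2/\theta$, one estimates $\|B^{-1/2}(A-B)B^{-1/2}\|_2 \le (1 + \|\bar Y\|_2/\theta)^2 - 1$. The elementary inequality $\log\det(I+M) \le (\text{rank }M) \log(1 + \|M\|_2)$ then gives $\KL(\rho^*,\nu_\theta) \le 2p(2d+r+2) \log(1 + \|\bar Y\|_2/\theta)$, which is exactly the KL contribution in the claim.

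It remains to control the oracle term $\int \|P^0 - P(YY^*)\|_\F^2 \,\mathrm{d}\rho^*$. Writing $Y = \bar Y + Y'$, one decomposes $YY^* - Y^0 Y^{0*} = (\bar Y\bar Y^* - Y^0 Y^{0*}) + (\bar Y Y'^* + Y'\bar Y^*) + Y'Y'^*$; the symmetry $Y' \mapsto -Y'$ of $\nu_\theta$ kills the linear term in expectation, Lemma~\ref{lem:multivariatestudent} together with unitary invariance delivers $\E_{\nu_\theta}[Y'Y'^*] = 2r\theta^2 I_d$, and the deterministic bias $\bar Y\bar Y^* - Y^0Y^{0*}$ is controlled via the matrix identity $\bar Y\bar Y^* - Y^0Y^{0*} = \tfrac12[(\bar Y - Y^0)(\bar Y + Y^0)^* + (\bar Y + Y^0)(\bar Y - Y^0)^*]$ combined with the hypothesis $\|Y^0-\bar Y\|_\F \|Y^0+\bar Y\|_\F \le 3^{-3n/2}2^{-n/2}/m$, producing a bias contribution absorbed into the $+1$ term of the claim. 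Pushing through the Pauli map via $\|P(M)\|_\F^2 \le 3^n\|M\|_\F^2$ and using $\sqrt{\E[\|Y'\|_\F^2]} \asymp \theta\sqrt{rd}$ for the cross term $\bar Y Y'^* + Y'\bar Y^*$ produces the factor $3^{3n/4}2^{(n+6)/4}(r + \sqrt{r}\|\bar Y\|_\F)$, the $2r/m$ term coming from the pure variance piece and the hypothesis on $\bar Y$. The main obstacle I anticipate is controlling the fourth-order moment $\E_{\nu_\theta}[\|P(Y'Y'^*)\|_\F^2]$ sharply enough to recover the announced exponents: the columns of $Y'$ are marginally complex Student's~$t_3$ distributions, which have infinite scalar fourth moments, so the argument must instead exploit the joint matrix-variate structure of $\nu_\theta$ (for instance by controlling traces $\tr(\calP_s^a Y'Y'^*)$ using the finite second moment $\E[Y'Y'^*] = 2r\theta^2 I_d$ together with singular-value repulsion induced by the Vandermonde factor in the marginal density of the singular values of $Y'$). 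Once this is in place, substituting $\lambda = 3m/8$ and combining the two inequalities via the union bound produces the stated bound with the prefactors $3/N_{\tot}$ and $3^n 8/(2^n N_{\tot})$.
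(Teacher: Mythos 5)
Your proposal does not prove the statement it was asked to prove. The statement is \Cref{Lem:MaiLem3} itself --- the two exponential moment inequalities $\E \exp\{ \lambda(\|P(YY^*)-\hat P\|_\F^2 - \|P^0-\hat P\|_\F^2) - \lambda(1+\tfrac{\lambda}{m})\|P^0-P(YY^*)\|_\F^2\} \le 1$ and its companion --- whereas your entire argument takes these inequalities as a given black box (``combining the two exponential moment inequalities of Lemma~3 with a carefully chosen test distribution'') and proceeds to sketch the proof of \Cref{thm:pac}. Nothing in your write-up establishes the moment bounds themselves: the Donsker--Varadhan step, the shifted-prior KL computation, and the oracle-term estimates are all downstream of the lemma, not a derivation of it. For the record, the paper does not reprove this lemma either; it imports it verbatim from Mai--Alquier, so the only acceptable answers here are either a citation or an actual proof of the concentration statement.

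A genuine proof would go roughly as follows. Expand $\|P(YY^*)-\hat P\|_\F^2 - \|P^0-\hat P\|_\F^2 = \|P(YY^*)-P^0\|_\F^2 - 2\langle \hat P - P^0,\, P(YY^*)-P^0\rangle$, so the quantity inside the first exponential equals $-2\lambda\langle \hat P - P^0, P(YY^*)-P^0\rangle - \tfrac{\lambda^2}{m}\|P^0-P(YY^*)\|_\F^2$. The inner product is $\tfrac{1}{m}\sum_{a}\sum_{i=1}^{m}\sum_s (1_{R_i^a=s}-p^0_{a,s})(p_{a,s}(Y)-p^0_{a,s})$, a sum of independent (across $a$ and $i$) centered bounded random variables; a Hoeffding--Bernstein bound on the moment generating function of each summand, with variance proxy controlled by $\sum_s (p_{a,s}(Y)-p^0_{a,s})^2$, yields exactly the compensating term $\tfrac{\lambda^2}{m}\|P^0-P(YY^*)\|_\F^2$, and multiplying the per-experiment MGF bounds gives the claim; the second inequality is the same computation with the sign of $\lambda$ flipped. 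None of this appears in your proposal. (As a side remark, even read as a sketch of \Cref{thm:pac}, your worry about fourth moments of the Student's $t$ prior is moot: the paper sidesteps it via the crude bound $\|P(YY^*)-P^0\|_\F^2 \le d\,\|P(YY^*)-P^0\|_\F$, which only requires the second moment supplied by \Cref{lem:multivariatestudent}.)
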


The following lemma is a straightforward adaptation of \cite[Lemma 4]{Mai2017}. 

\begin{lemma} \label{lem:Catoni_application}
    For $\lambda > 0$ such that $\frac{\lambda}{m} < 1$, with probability $1-\epsilon$, $\epsilon \in ]0,1[$, there holds {\small
\begin{equation} \label{eq:final_step_proba}
    \int \|P^0 - P(YY^*) \|^2_\F \hat \nu_{\lambda_{\mathrm{new}},\theta}^{\mathrm{BM}}(Y) \rd Y \leq \inf_{\nu  \in \Pi(\C^{d \times r})} \frac{\left(1 + \frac{\lambda}{m} \right) \int \|P^0 - P(YY^*) \|^2_\F \nu(Y) \rd Y   +2 \frac{\log\left(\frac{2}{\epsilon}\right) + D_\KL(\nu, \nu_\theta)}{\lambda}}{1-\frac{\lambda}{m}},
\end{equation}}
    with $\Pi(\C^{d \times r})$ the set of probability measures on $\C^{d \times r}$ and $D_\KL(\nu, \nu_\theta)$ the KL-divergence between an arbitrary probability measure $\nu \in \Pi(\C^{d \times r})$ and our prior density $\nu_\theta$, and $\lambda_{\mathrm{new}} := \frac{\lambda}{2}(1+\frac{\lambda}{m})$. 
\end{lemma}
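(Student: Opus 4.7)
The starting point is Lemma~\ref{lem:Catoni_application}, which bounds $\int\|P^0-P(YY^*)\|_\F^2\, d\hat\nu_{\lambda,\theta}^{\mathrm{BM}}(Y)$ by the infimum of a ``bias + KL'' trade-off over all reference measures. To pass from this to a Frobenius bound on $\hat\rho_{\lambda,\theta}^{\mathrm{BM}}-\rho^0$, convexity of $\|\cdot\|_\F^2$ combined with $\hat\rho_{\lambda,\theta}^{\mathrm{BM}}=\int YY^*\, d\hat\nu_{\lambda,\theta}^{\mathrm{BM}}$ gives $\|\hat\rho_{\lambda,\theta}^{\mathrm{BM}}-\rho^0\|_\F^2 \leq \int\|YY^*-\rho^0\|_\F^2\, d\hat\nu_{\lambda,\theta}^{\mathrm{BM}}$. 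A deterministic \emph{restricted-isometry} estimate then relates $\|M\|_\F$ to $\|P(M)\|_\F$: expanding $\calP_s^a$ in the Pauli basis and using the character identity $\sum_s\prod_{i\in J\Delta J'}o_i(s)=2^n\mathbbm{1}_{J=J'}$ yields, for Hermitian traceless $M$,
\[
\|P(M)\|_\F^2 \;=\; \tfrac{1}{2^n}\sum_{P\neq I}3^{n-w(P)}|\tr(PM)|^2,
\]
with $w(P)$ the Pauli weight, which gives the two-sided bounds $\|M\|_\F^2\leq\|P(M)\|_\F^2\leq 3^{n-1}\|M\|_\F^2$ and is precisely what converts bounds on the probability-matrix error into bounds on the density-matrix error.

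Following \cite{Dalalyan2020}, I would next optimize the variational oracle inequality at the localized reference measure $\nu^{\bar Y}(Y):=\nu_\theta(Y-\bar Y)$. The bias term $\int\|P^0-P(YY^*)\|_\F^2\, d\nu^{\bar Y}(Y)$ decomposes via the identity $YY^* - \bar Y\bar Y^* = (Y-\bar Y)\bar Y^* + \bar Y(Y-\bar Y)^* + (Y-\bar Y)(Y-\bar Y)^*$ into a ``distance-to-truth'' piece $\|P^0-P(\bar Y\bar Y^*)\|_\F^2$ and fluctuation pieces in $Y-\bar Y$. The distance-to-truth piece is controlled through the polarization identity $\bar Y\bar Y^*-Y^0Y^{0*}=\tfrac12\bigl((\bar Y+Y^0)(\bar Y-Y^0)^* + (\bar Y-Y^0)(\bar Y+Y^0)^*\bigr)$, the upper isometry $\|P(\cdot)\|_\F^2\leq 3^{n-1}\|\cdot\|_\F^2$, and the hypothesis $\|Y^0-\bar Y\|_\F\|Y^0+\bar Y\|_\F\leq 3^{-3n/2}2^{-n/2}/m$, which together make it negligible. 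The fluctuation pieces reduce to integrals of $\|Y-\bar Y\|_\F^2$ and cross-terms against $\nu^{\bar Y}$, which \Cref{lem:multivariatestudent} evaluates exactly via $\int\|Y-\bar Y\|_\F^2\, d\nu^{\bar Y}=2rd\theta^2$; this is what produces the coefficients $r$ and $\sqrt r\|\bar Y\|_\F$ appearing in the first line of the theorem.

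The crux of the proof is the KL estimate $D_\KL(\nu^{\bar Y},\nu_\theta)\leq 2p(2^{n+1}+r+2)\log(1+\|\bar Y\|_2/\theta)$. By the definition of $\nu_\theta$ and the substitution $Z=Y-\bar Y$,
\[
D_\KL(\nu^{\bar Y},\nu_\theta) = \tfrac{2d+r+2}{2}\int\log\frac{\det(\theta^2 I_d+(Z+\bar Y)(Z+\bar Y)^*)}{\det(\theta^2 I_d+ZZ^*)}\, d\nu_\theta(Z).
\]
Since $\bar Y$ has rank $p$, a unitary change of basis aligning $\mathrm{range}(\bar Y)$ with the first $p$ coordinates reduces the ratio of determinants to a perturbation supported on a $p\times p$ sub-block; bounding the log-determinant by $p\log(1+\|\bar Y\|_2^2/\theta^2)\leq 2p\log(1+\|\bar Y\|_2/\theta)$ and multiplying by $(2d+r+2)/2$ with $d=2^n$ yields the stated bound. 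This is the main technical hurdle, since a naive estimate would scale with the ambient dimension $d$ rather than the rank $p$ and destroy the low-rank rate; it is the complex-valued adaptation of the argument in \cite{Dalalyan2020}. Combining the three ingredients with $\lambda=3m/8$ (so that $(1-\lambda/m)^{-1}=8/5$ and $1+\lambda/m=11/8$ are absorbed into the numerical constants $3$ and $8$) produces the two-term bound stated in the theorem.
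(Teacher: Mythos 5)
Your proposal does not prove the statement in question. You treat \Cref{lem:Catoni_application} as a given ``starting point'' and then sketch how to deduce \Cref{thm:pac} from it (Jensen's inequality, the isometry between $\|M\|_\F$ and $\|P(M)\|_\F$, the translated reference measure $\nu_\theta(\cdot-\bar Y)$, the rank-$p$ KL bound, and the choice $\lambda=3m/8$). That is the content of the paper's proof of the \emph{theorem}, not of the lemma. The lemma itself is the PAC-Bayesian oracle inequality, and none of the ingredients needed to establish it appear in your write-up.

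Concretely, the missing argument runs as follows. One starts from the two exponential moment inequalities of \cite[Lemma 3]{Mai2017} (recalled as \Cref{Lem:MaiLem3}), which control $\E\exp\{\pm\lambda(\|P(YY^*)-\hat P\|_\F^2-\|P^0-\hat P\|_\F^2)\mp\lambda(1\pm\lambda/m)\|P^0-P(YY^*)\|_\F^2\}$ pointwise in $Y$. Integrating these against the prior $\nu_\theta$ and applying Fubini, one then invokes the Donsker--Varadhan / Catoni duality formula $\log\int e^{h}\,\rd\nu_\theta=\sup_{\nu}\{\int h\,\rd\nu - D_\KL(\nu,\nu_\theta)\}$ (\cite[Lemma 1.1.3]{Catoni2007}) to pass to a supremum over all $\nu\in\Pi(\C^{d\times r})$ with the KL penalty, converts the resulting exponential moment bounds into probability bounds via $\mathbbm{1}_{x\geq 0}\leq e^{x}$, and takes a union bound over the two events (this is where the $\log(2/\epsilon)$ and the factor $2$ come from). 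Chaining the two resulting high-probability inequalities eliminates the empirical terms $\|P(YY^*)-\hat P\|_\F^2-\|P^0-\hat P\|_\F^2$ and yields the claimed bound for an arbitrary $\nu$. Finally, and crucially, one must justify why the \emph{specific} measure $\hat\nu_{\lambda_{\mathrm{new}},\theta}^{\mathrm{BM}}$ with $\lambda_{\mathrm{new}}=\tfrac{\lambda}{2}(1+\tfrac{\lambda}{m})$ appears on the left-hand side: the Gibbs posterior is the exact minimizer of the free-energy functional $\nu\mapsto\lambda_{\mathrm{new}}\int L^{\mathrm{BM}}\,\rd\nu + D_\KL(\nu,\nu_\theta)$ (see \cite[Eq.~10]{Guedj2019}), which lets one replace the optimizing $\nu$ by the posterior while keeping the infimum on the right. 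Without this last step the inequality would only hold for the (data-dependent, non-constructive) optimal $\nu$, not for the estimator actually computed.
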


\begin{proof}
    Due to \Cref{Lem:MaiLem3}, there holds
        \begin{align*}
         \int \E \exp \left\{ \lambda ( \|P(YY^*) - \hat P \|^2_\F - \| P^0 - \hat P \|^2_\F) - \lambda \left(1 + \frac{\lambda}{m} \right) \|P^0 - P(YY^*) \|^2_\F \right\} \nu_\theta(Y) \rd Y \leq 1, \\
        \int \E \exp \left\{ \lambda \left(1 - \frac{\lambda}{m} \right) \|P^0 - P(YY^*) \|^2_\F  - \lambda (\|P(YY^*) - \hat P \|^2_\F - \| P^0 - \hat P \|^2_\F) \right\} \nu_\theta(Y) \rd Y \leq 1.
        \end{align*}
        Fubini's theorem then gives 
        \begin{align*}
          \E \int \exp \left\{ \lambda ( \|P(YY^*) - \hat P \|^2_\F - \| P^0 - \hat P \|^2_\F) - \lambda \left(1 + \frac{\lambda}{m} \right) \|P^0 - P(YY^*) \|^2_\F \right\} \nu_\theta(Y) \rd Y \leq 1 \\
         \E \int \exp \left\{ \lambda \left(1 - \frac{\lambda}{m} \right) \|P^0 - P(YY^*) \|^2_\F  - \lambda (\|P(YY^*) - \hat P \|^2_\F - \| P^0 - \hat P \|^2_\F) \right\} \nu_\theta(Y) \rd Y \leq 1.
        \end{align*}    
        Using \cite[Lemma 1.1.3]{Catoni2007}, there holds for any $\epsilon > 0$
        \begin{align*}
        \E \exp \sup_{\nu \in \Pi(\C^{d \times r})} \Bigg\{ &\lambda \left( \int \|P(YY^*) - \hat P \|^2_\F \nu(Y) \rd Y - \| P^0 - \hat P \|^2_\F \right) - \log\left(\frac{2}{\epsilon}\right) - D_\KL(\nu, \nu_\theta) \\
        &-\lambda \left(1 + \frac{\lambda}{m} \right) \int \|P^0 - P(YY^*) \|^2_\F \nu(Y) \rd Y \Bigg\} \leq \frac{\epsilon}{2} \\
        \E \exp \sup_{\nu \in \Pi(\C^{d \times r})} \Bigg\{ &\lambda \left(1 - \frac{\lambda}{m} \right) \int \|P^0 - P(YY^*) \|^2_\F \nu(Y) dY - \log\left(\frac{2}{\epsilon}\right) - D_\KL(\nu, \nu_\theta) \\
        &- \lambda \left( \int \|P(YY^*) - \hat P \|^2_\F \nu(Y) dY - \| P^0 - \hat P \|^2_\F \right) \Bigg\} \leq \frac{\epsilon}{2}.
        \end{align*}
        Now, using $\mathbbm{1}_{x \geq 0}(x) \leq \exp(x)$, one has 
        \begin{align*}
        \P \Bigg \{\sup_{\nu \in \Pi(\C^{d \times r})} \Bigg[ &\lambda \left( \int \|P(YY^*) - \hat P \|^2_\F \nu(Y) \rd Y - \| P^0 - \hat P \|^2_\F \right) - \log\left(\frac{2}{\epsilon}\right) - D_\KL(\nu, \nu_\theta) \\
        &-\lambda \left(1 + \frac{\lambda}{m} \right) \int \|P^0 - P(YY^*) \|^2_\F \nu(Y) \rd Y \Bigg] \geq 0 \Bigg\} \leq \frac{\epsilon}{2} \\
        \P \Bigg\{ \sup_{\nu  \in \Pi(\C^{d \times r})} \Bigg[ &\lambda \left(1 - \frac{\lambda}{m} \right) \int \|P^0 - P(YY^*) \|^2_\F \nu(Y) \rd Y - \log\left(\frac{2}{\epsilon}\right) - D_\KL(\nu, \nu_\theta) \\
        &- \lambda \left( \int \|P(YY^*) - \hat P \|^2_\F  \nu(Y) \rd Y - \| P^0 - \hat P \|^2_\F \right) \Bigg] \geq 0 \Bigg\} \leq \frac{\epsilon}{2}.
        \end{align*}
We deduce that for any density $\nu  \in \Pi(\C^{d \times r})$, {\small
            \begin{align*}
        \P \Bigg \{ \int \|P(YY^*) - \hat P \|^2_\F \nu(Y) \rd Y &\leq \| P^0 - \hat P \|^2_\F  + \frac{\log\left(\frac{2}{\epsilon}\right) + D_\KL(\nu, \nu_\theta)}{\lambda} \\
        &+ \left(1 + \frac{\lambda}{m} \right) \int \|P^0 - P(YY^*) \|^2_\F \nu(Y) \rd Y \Bigg \} \geq 1-\frac{\epsilon}{2}\\
        \P \Bigg \{\int \|P^0 - P(YY^*) \|^2_\F \nu(Y) \rd Y &\leq \frac{ \int \|P(YY^*) - \hat P \|^2_\F \nu(Y) \rd Y - \| P^0 - \hat P \|^2_\F +\frac{\log\left(\frac{2}{\epsilon}\right) + D_\KL(\nu, \nu_\theta)}{\lambda}}{1-\frac{\lambda}{m}}\Bigg \} \geq 1-\frac{\epsilon}{2}.    
    \end{align*}}
    The next result is obtained by using a union argument (i.e., replacing the first term in the right-hand side of the second inequality by its upper bound given by the first inequality): for any density $\nu  \in \Pi(\C^{d \times r})$, with probability at least $1-\epsilon$ over the data,
\begin{equation*}
    \int \|P^0 - P(YY^*) \|^2_\F  \nu(Y) \rd Y \leq  \frac{\left(1 + \frac{\lambda}{m} \right) \int \|P^0 - P(YY^*) \|^2_\F \nu(Y) \rd Y   +2 \frac{\log\left(\frac{2}{\epsilon}\right) + D_\KL(\nu, \nu_\theta)}{\lambda}}{1-\frac{\lambda}{m}}.
\end{equation*}
Rewriting this expression for the density that corresponds to the infimum of the right-hand side gives: 
{\small\begin{equation*}
    \int \|P^0 - P(YY^*) \|^2_\F \hat \nu_{\lambda_{\mathrm{new},\theta}}^{\mathrm{BM}}(Y) \rd Y \leq \inf_{\nu  \in \Pi(\C^{d \times r})} \frac{\left(1 + \frac{\lambda}{m} \right) \int \|P^0 - P(YY^*) \|^2_\F \nu(Y) \rd Y   +2 \frac{\log\left(\frac{2}{\epsilon}\right) + D_\KL(\nu, \nu_\theta)}{\lambda}}{1-\frac{\lambda}{m}},
\end{equation*}}
where the posterior density $\hat \nu_{\lambda_{\mathrm{new},\theta}}^{\mathrm{BM}}$ in the left-hand side, for $\lambda_{\mathrm{new}} := \frac{\lambda}{2}(1+\frac{\lambda}{m})$, appears due to \cite[Eq. 10]{Guedj2019}.
\end{proof}

We next need to upper bound the right-hand side of \eqref{eq:final_step_proba}. For this, borrowing from \cite{Dalalyan2020}, we restrict the infimum to a family of priors obtained as translations of our prior $\nu_\theta$, that we write $\bar \nu_{\theta}(Y) = \nu_\theta(Y-\bar Y)$ for some $\bar Y \in \C^{d \times r}$. We derive the following result, which is a direct extension of \cite[Lemma 2]{Dalalyan2020} to the complex setting.
\begin{lemma}
     For any matrix $\bar Y \in \C^{d \times r}$ of rank $p$, let $\bar \nu_\theta$ be the probability density function obtained from the prior $\nu_\theta$ by translation, i.e.,  $\bar \nu_\theta(Y) = \nu_\theta(Y-\bar Y)$. There holds 
    \[ D_{\KL}(\bar \nu_\theta | \nu_\theta) \leq 2p (2d+r+2) \log \left( 1+ \frac{\| \bar Y \|_2}{\theta} \right).\]
\end{lemma}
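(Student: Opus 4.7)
The starting point is the explicit form of the KL divergence: using the form of $\nu_\theta$ from \eqref{eq:prior} and the change of variable $Z := Y - \bar Y$,
\[ D_{\KL}(\bar \nu_\theta | \nu_\theta) = \frac{2d+r+2}{2} \int \nu_\theta(Z) \log \frac{\det(\theta^2 I_d + (Z+\bar Y)(Z+\bar Y)^*)}{\det(\theta^2 I_d + ZZ^*)} \rd Z. \]
The whole task then reduces to a $Z$-independent pointwise bound on the log-determinant ratio inside the integral.

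To set up this bound, I would first invoke unitary invariance: since $\nu_\theta$ and the log-determinant ratio depend only on singular values, both are unchanged under $(Z, \bar Y) \mapsto (U Z V^*, U \bar Y V^*)$ for unitary $U \in \C^{d \times d}$, $V \in \C^{r \times r}$. A suitable choice of $U,V$ thus allows us to assume that $\bar Y$ has the block form $\bar Y = \begin{pmatrix} \bar Y_1 \\ 0 \end{pmatrix}$ with $\bar Y_1 \in \C^{p \times r}$ of rank $p$. Partitioning $Z = \begin{pmatrix} Z_1 \\ Z_2 \end{pmatrix}$ conformably, a Schur complement on the $(d-p) \times (d-p)$ block together with the Sherman--Morrison--Woodbury identity (using that $Z_2^*(\theta^2 I_{d-p} + Z_2 Z_2^*)^{-1} Z_2 = I_r - \theta^2 G$) yields the key reduction
\[ \frac{\det(\theta^2 I_d + (Z+\bar Y)(Z+\bar Y)^*)}{\det(\theta^2 I_d + ZZ^*)} = \frac{\det(I_p + (Z_1 + \bar Y_1) G (Z_1 + \bar Y_1)^*)}{\det(I_p + Z_1 G Z_1^*)}, \]
with $G := (\theta^2 I_r + Z_2^* Z_2)^{-1}$. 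This collapses a $d \times d$ ratio to a $p \times p$ ratio in which only $\bar Y_1$ enters as a perturbation, exposing the $p$ factor in the final rate.

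The main obstacle is bounding this $p \times p$ ratio uniformly in $Z$. The plan is to apply the matrix Young inequality $(A+B)(A+B)^* \preceq (1+\epsilon) A A^* + (1+\epsilon^{-1}) B B^*$ (valid for any $\epsilon > 0$, since $(1+\epsilon)AA^* + (1+\epsilon^{-1})BB^* - (A+B)(A+B)^* = (\sqrt{\epsilon}A - \epsilon^{-1/2}B)(\sqrt{\epsilon}A - \epsilon^{-1/2}B)^* \succeq 0$) with $A = Z_1 G^{1/2}$, $B = \bar Y_1 G^{1/2}$. This, together with $\bar Y_1 G \bar Y_1^* \preceq \theta^{-2} \bar Y_1 \bar Y_1^* \preceq \theta^{-2} \|\bar Y\|_2^2 I_p$ (using $G \preceq \theta^{-2} I_r$) and with the observation that $I_p + Z_1 G Z_1^* \succeq I_p$ (which via Ostrowski's theorem implies $\det(I_p + t (I_p + Z_1 G Z_1^*)^{-1/2} \bar Y_1 G \bar Y_1^* (I_p + Z_1 G Z_1^*)^{-1/2}) \leq \det(I_p + t \bar Y_1 G \bar Y_1^*)$ for $t \geq 0$), yields the $Z$-independent bound
\[ \frac{\det(I_p + (Z_1 + \bar Y_1) G (Z_1 + \bar Y_1)^*)}{\det(I_p + Z_1 G Z_1^*)} \leq \bigl[(1+\epsilon) + (1 + \epsilon^{-1}) \|\bar Y\|_2^2/\theta^2\bigr]^p. \]

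Finally, choosing $\epsilon = \|\bar Y\|_2/\theta$ makes the bracketed expression collapse to $1 + 2\|\bar Y\|_2/\theta + \|\bar Y\|_2^2/\theta^2 = (1 + \|\bar Y\|_2/\theta)^2$. Taking the logarithm (the expectation is trivial since the bound is uniform in $Z$) then yields $D_{\KL}(\bar \nu_\theta | \nu_\theta) \leq p(2d+r+2) \log(1+\|\bar Y\|_2/\theta)$, which implies the claimed bound.
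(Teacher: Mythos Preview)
Your proof is correct, and in fact yields the sharper bound $p(2d+r+2)\log(1+\|\bar Y\|_2/\theta)$, a factor of two tighter than the stated lemma. The route, however, is genuinely different from the paper's. The paper (following \cite{Dalalyan2020}) works directly with the $d\times d$ matrix $ABA$ where $A=(\theta^2 I_d+YY^*)^{-1/2}$ and $B=\theta^2 I_d+(Y-\bar Y)(Y-\bar Y)^*$, argues that $ABA-I_d$ has rank at most $2p$, bounds each nontrivial eigenvalue by $(1+\|\bar Y^*Au_i\|)^2$, and then uses concavity of $\log(1+\sqrt{x})$ to aggregate. You instead exploit unitary invariance to put $\bar Y$ in block form and then use a Schur complement together with the Woodbury identity to collapse the $d\times d$ determinant ratio to a $p\times p$ one; the matrix Young inequality with the optimized parameter $\epsilon=\|\bar Y\|_2/\theta$ then gives the pointwise bound directly. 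The payoff of your approach is twofold: the rank-$p$ structure becomes transparent (you land on a $p\times p$ problem rather than a rank-$2p$ perturbation of a $d\times d$ matrix), and the resulting constant is halved. The paper's approach, on the other hand, avoids the block reduction and stays closer to the spectral picture of the original \cite{Dalalyan2020} argument. A minor remark: your Ostrowski step is not strictly needed, since bounding $\bar Y_1 G\bar Y_1^*\preceq \theta^{-2}\|\bar Y\|_2^2 I_p$ first and then using $M^{-1}\preceq I_p$ gives the same conclusion more directly.
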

\begin{proof}This result is a direct extension of \cite[Lemma 2]{Dalalyan2020}. We recall here the proof, extracted from \cite{Dalalyan2020}, for completeness.
    By definition of the KL-divergence, there holds
    \begin{align*}
        D_{\KL}(\bar \nu_\theta | \nu_\theta) &= \int_{\C^{d \times r}} \log \left( \frac{\nu_\theta(Y)}{\bar \nu_\theta(Y)} \right) \nu_\theta(Y) \rd Y \\
        &= \int_{\C^{d \times r}} \log \left( \frac{\nu_\theta(Y)}{\nu_\theta(Y-\bar Y)} \right) \nu_\theta(Y) \rd Y.
    \end{align*}
    
    Note first that, if $p = 0$, the result is trivial. We therefore assume from now $p>0$. We define $A := (\theta^2 I_d + YY^*)^{-1/2}$, and $B := \theta^2 I_d + (Y-\bar Y)(Y-\bar Y)^*$, which are both Hermitian and positive definite\footnote{For a Hermitian positive definite matrix $M$, we write $\log(M)$ and $M^{1/2}$ the principal matrix logarithm and principal matrix square root, respectively. Let $M = U \Lambda U^*$ be an eigenvalue decomposition, with $U \in \C^{d \times d}$ unitary and $\Lambda \in \R^{d \times d}$ diagonal with strictly positive entries. Then, $\log(M) = U \log(\Lambda) U^*$ and $M^{1/2} = U \Lambda^{1/2} U^*$, where the logarithm and square root are applied to the diagonal entries of $\Lambda$. }. It follows that 
    \begin{align}
    2 \log \left( \frac{\nu_\theta(Y)}{\nu_\theta(Y-\bar Y)} \right) &= 2 \log \left( \frac{\det(\theta^2 I_d + YY^*)^{-(2d+r+2)/2}}{\det(\theta^2 I_d + (Y-\bar Y)(Y-\bar Y)^*)^{-(2d+r+2)/2}} \right) \\
    &= (2d+r+2) \log \left( \frac{\det(B)} {\det(A^{-2})} \right) \\
    &= (2d+r+2) \log \left(\det(ABA) \right) \\
    &= (2d+r+2) \sum_{i = 1}^d \log \lambda_i, \label{eq:eigenvalue_log}
    \end{align}
where we denote by $\lambda_i$ the $i$th largest eigenvalue of the matrix $ABA$, with associated eigenvector $u_i \in \C^d$.
We next write 
\begin{align*}
    ABA &= I_d + A\bar Y \bar Y^*A - A \bar Y Y^* A - A Y \bar Y^* A,\\
    &= I_d + A\bar Y (\bar Y -Y)^* A - A Y \bar Y^* A,
\end{align*}
and show that the rank of $ABA-I_d$ is at most the minimum between $2p$ and $d$, which we write $\min(2p,d)$ in short. Indeed, note first that $\range(A \bar Y (\bar Y - Y)^* A) \subseteq A \; \range(\bar Y)$\footnote{$A \calS$ refers to the image of $\calS$ under the application $A$.}, which implies that $\rank(A \bar Y (\bar Y - Y)^* A)$ has dimension at most $p$. Then, note that $\rank(A Y \bar Y^* A) = \rank(A \bar Y Y^* A) \leq p$ using the same argument. It follows that the rank of $ABA-I_d$ is at most $\min(2p,d)$, the matrix $ABA$ has at most $\min(2p,d)$ eigenvalues different from one, and consequently the summation in \eqref{eq:eigenvalue_log} involves at most $\min(2p,d)$ nonzero terms. Let us assume without loss of generality that the nonzero eigenvalues of $ABA-I_d$ are $\lambda_1, \dots, \lambda_K$ with $K \leq \min(2p,d)$ according to the previous discussion. For all $i = 1, \dots, K$, it holds
\begin{align*}
    \lambda_i &= u_i^*(ABA) u_i \\
    &= u_i^* \left(  I_d + A\bar Y (\bar Y -Y)^* A - A Y \bar Y^* A \right) u_i \\
    &= 1 + u_i^* \left( A\bar Y (\bar Y -Y)^* A - A Y \bar Y^* A \right) u_i.
\end{align*}
As $\| (\bar Y - Y)^* A u_i \|^2 = u_i^* A (\bar Y - Y) (\bar Y - Y)^* A u_i = u_i^* A \left( \bar Y \bar Y^* - \bar Y Y^* - Y \bar Y^* + YY^*\right) A u_i = u_i^* \left( A\bar Y (\bar Y -Y)^* A - A Y \bar Y^* A \right) u_i + \| Y^*Au_i \|^2$, it follows that:
\begin{align*}
    \lambda_i &= 1 + \| (\bar Y - Y)^* A u_i \|^2 - \| Y^*Au_i \|^2 \\
    &\leq 1 + (\| \bar Y^*Au_i \| + \| Y^*Au_i \|)^2 - \| Y^*Au_i \|^2 \\
    &= 1 + \| \bar Y^*Au_i \|^2 + \| Y^*Au_i \|^2 + 2 \| \bar Y^*Au_i \| \| Y^*Au_i \| - \| Y^*Au_i \|^2 \\
    &= 1 + \| \bar Y^*Au_i \|^2  + 2 \| \bar Y^*Au_i \| \| Y^*Au_i \| \\ 
    &\leq (1 + \| \bar Y^*Au_i \|)^2,
\end{align*}
where we used in the last inequality the fact that $\| Y^*Au_i \| \leq 1$. Indeed, note that
\[ 1 = u_i^*u_i = u_i^* (\theta^2 I_d + YY^*)^{-1/2}(\theta^2 I_d + YY^*)(\theta^2 I_d + YY^*)^{-1/2} u_i = \theta^2 \|A u_i\|^2 + \| Y^* A u_i\|^2 \geq \| Y^* A u_i\|^2.\]
Using the concavity of the function $\log(1+x^{1/2})$ over $(0,\infty)$, \eqref{eq:eigenvalue_log} gives
\begin{align*}
    2 \log \left(\frac{\nu_\theta(Y)}{\nu_\theta(Y-\bar Y)} \right) &= (2d+r+2) \sum_{i = 1}^{K} \log \lambda_i \\
    &\leq 2 (2d+r+2) \sum_{i = 1}^{K} \log (1+\| \bar Y^* A u_i\|) \\
    &= 2 (2d+r+2) K \sum_{i = 1}^{K}\frac{1}{K} \log (1+(\| \bar Y^* A u_i\|^2)^{1/2}) \\
    &\leq 2K (2d+r+2)  \log \left(1+ \left(\frac{1}{K} \sum_{i = 1}^{K} \| \bar Y^* A u_i\|^2\right)^{1/2} \right).
\end{align*}
Finally, using the fact that the eigenvectors $u_1, \dots u_{K}$ are orthonormal and that $A \preceq \theta^{-1} I_d$\footnote{Note indeed that the eigenvalues of $A$ are given by $((\theta^2+\sigma_i(Y)^2)^{-1/2})_{i = 1, \dots, r}$ with $\sigma_i(Y)$ the $i$th singular value of $Y$. Since $(\theta^2+\sigma_i(Y)^2)^{-1/2} \leq \theta$ for all  $\sigma_i(Y) > 0$, it follows that $A \preceq \theta^{-1} I_d$.}, and writing $U = [u_1, \dots, u_{K}]$, there holds: 
\[ \left( \sum_{i = 1}^{K} \| \bar Y^* A u_i\|^2\right)^{1/2}  = \| \bar Y^* A U\|_\F \leq \sqrt{K} \|\bar Y^* A U\|_2 \leq \sqrt{K} \|\bar Y\|_2 \|AU\|_2 \leq \frac{\sqrt{K}}{\theta} \|\bar Y\|_2. \]
The first inequality comes from \cite[Result 8.5.2(4e)]{Lutkepohl1996}, the second results from the submultiplicativity of the spectral norm, and the last one comes from the unitarily invariance of the spectral norm and from the matrix inequality $A \preceq \theta^{-1} I_d$. As a result, and since $K = \min(2p,d)$, it follows that
\[ \log \left(\frac{\nu_\theta(Y)}{\nu_\theta(Y-\bar Y)} \right) \leq 2p (2d+r+2) \log \left( 1+ \frac{\| \bar Y \|_2}{\theta} \right).\]
\end{proof}

\begin{lemma} \label{lem:upper_bound_proba_gap}
    For any $\bar Y \in \C^{d \times r}$ of rank $p$, let $\bar \nu_{\theta}$ be the shifted prior $\bar \nu_{\theta}(Y) = \nu_\theta(Y-\bar Y)$. Then, there holds:  
    \[ \int \| P(YY^*)-P^0\|_\F^2 \bar \nu_{\theta}(Y) \rd Y \leq d 6^{n/2} (a + \sqrt{2dr} \theta  b + 2dr \theta^2),\]
with $a := \|Y^0-\bar Y \|_\F \|Y^0+\bar Y \|_\F$ and $b:= \|Y^0-\bar Y \|_\F+ \|Y^0+\bar Y \|_\F$.
\end{lemma}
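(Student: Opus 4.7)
The strategy is to establish a pointwise bound of the form $\|P(YY^*) - P^0\|_\F^2 \leq d\cdot 6^{n/2}\|D\|_\F$, where $D := YY^* - Y^0 Y^{0*}$, and then to integrate against $\bar\nu_\theta$ using \Cref{lem:multivariatestudent}. The crucial point is that this bound must be \emph{linear} in $\|D\|_\F$, not quadratic: since the prior $\nu_\theta$ has only three degrees of freedom (\Cref{lem:multivariatestudent}), the moment $\int \|Z\|_\F^k\,\nu_\theta(Z)\,\rd Z$ is infinite for $k\geq 3$, so a quadratic-in-$\|D\|_\F$ pointwise bound would lead to a divergent expectation on the right-hand side.

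\textbf{Pointwise bound.} Since each $\mathcal{P}_s^a = e_s^a(e_s^a)^*$ is a rank-one projector and, for each fixed $a$, the vectors $\{e_s^a\}_s$ form an orthonormal basis of $\C^d$ (so that $\sum_s \mathcal{P}_s^a = I_d$), Cauchy--Schwarz in this basis yields $\sum_s(\tr(\mathcal{P}_s^a D))^2 \leq \|D\|_\F^2$, hence $\|P(YY^*) - P^0\|_\F^2 \leq 3^n\|D\|_\F^2$. To upgrade this to a linear-in-$\|D\|_\F$ bound, I would further exploit $|\tr(\mathcal{P}_s^a D)| \leq \|D\|_{\mathrm{op}}$ (rank-one projector) together with $\sum_s |\tr(\mathcal{P}_s^a D)| \leq \|D\|_1$ (which follows from $\sum_s \mathcal{P}_s^a = I_d$ by splitting $D$ into positive and negative parts), obtaining $\sum_{a,s}(\tr(\mathcal{P}_s^a D))^2 \leq 3^n \|D\|_{\mathrm{op}} \|D\|_1$. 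Combining this with Schatten-norm inequalities ($\|D\|_1 \leq \sqrt{d}\|D\|_\F$ since $\rank(D)\leq d$) and using the factorization of $\|D\|_{\mathrm{op}}$ through $\|Y-Y^0\|_\F$ and $\|Y+Y^0\|_\F$ should absorb one factor of $\|D\|_\F$ into the constant $d\cdot 6^{n/2}$.

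\textbf{Bound on $\|D\|_\F$ and expectation.} The symmetric identity $2D = (Y-Y^0)(Y+Y^0)^* + (Y+Y^0)(Y-Y^0)^*$ combined with submultiplicativity of the Frobenius norm gives $\|D\|_\F \leq \|Y-Y^0\|_\F\,\|Y+Y^0\|_\F$. Writing $Y = \bar Y + Z$, where $Z$ is distributed as $\nu_\theta$ (by translation of $\bar\nu_\theta$), the triangle inequality then yields
\[
\|D\|_\F \leq (\|\bar Y - Y^0\|_\F + \|Z\|_\F)(\|\bar Y + Y^0\|_\F + \|Z\|_\F) = a + b\|Z\|_\F + \|Z\|_\F^2.
\]
Integrating against $\nu_\theta$, Jensen's inequality gives $\int \|Z\|_\F\,\nu_\theta\,\rd Z \leq \sqrt{\int \|Z\|_\F^2\,\nu_\theta\,\rd Z} = \sqrt{2rd}\,\theta$, where the second equality uses \Cref{lem:multivariatestudent} (and the identity $\int\|Z\|_\F^2\,\nu_\theta\,\rd Z = \sum_{i=1}^r \int\|y_i\|^2\,\nu_\theta\,\rd Y = 2rd\theta^2$). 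This produces $\int \|D\|_\F\,\bar\nu_\theta\,\rd Y \leq a + \sqrt{2dr}\,\theta\,b + 2dr\theta^2$, and combining with the pointwise bound yields the lemma.

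\textbf{Main obstacle.} The hard step is the pointwise bound: the naive inequality $\|P(YY^*)-P^0\|_\F^2 \leq 3^n\|D\|_\F^2$ leads to a divergent expectation under the heavy-tailed Student's $t$ prior, so it is essential to turn it into a linear-in-$\|D\|_\F$ bound by simultaneously exploiting the rank-one structure of each Pauli projector $\mathcal{P}_s^a$, the ambient dimension $d$, and the fact that $D = YY^* - Y^0Y^{0*}$ has rank at most $2r$. The factor $d\cdot 6^{n/2}$ in the lemma reflects the precise way in which these three ingredients combine.
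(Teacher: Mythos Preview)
Your second half---the bound $\|D\|_\F \le a + b\|Z\|_\F + \|Z\|_\F^2$ via the symmetric factorization and triangle inequality, followed by integration using \Cref{lem:multivariatestudent} and Jensen---is exactly what the paper does. The gap is in your pointwise step.

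You correctly recognise that the bound must be linear in $\|D\|_\F$ (higher moments of $\|Z\|_\F$ diverge under the Student prior), but your proposed route does not close. From
\[
\sum_{a,s}\bigl(\tr(\calP_s^a D)\bigr)^2 \;\le\; 3^n\,\|D\|_{\mathrm{op}}\,\|D\|_1 \;\le\; 3^n\sqrt{d}\,\|D\|_{\mathrm{op}}\,\|D\|_\F
\]
you still carry the factor $\|D\|_{\mathrm{op}}$, and there is no way to ``absorb one factor of $\|D\|_\F$ into the constant $d\cdot 6^{n/2}$'': $\|D\|_{\mathrm{op}}$ is not uniformly bounded over $Y\in\C^{d\times r}$ (it scales like $\|Y\|^2$ for large $Y$), so after the triangle-inequality expansion you are back to $\|Z\|_\F^4$ terms with infinite expectation. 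Neither the rank-one structure of $\calP_s^a$ nor $\rank(D)\le 2r$ helps here, because any inequality built purely from Schatten norms of $D$ is homogeneous of degree two in $D$, hence of degree four in $Y$.

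The paper's trick is entirely different and does not go through Schatten norms of $D$. It uses that each entry of $P(YY^*)-P^0$ is a difference of probabilities and hence lies in $[-1,1]$, so $\|P(YY^*)-P^0\|_\F$ is itself bounded by a dimensional constant (the paper writes $d$). This immediately gives
\[
\|P(YY^*)-P^0\|_\F^2 \;\le\; d\,\|P(YY^*)-P^0\|_\F,
\]
and then the operator-norm estimate $\|P(\,\cdot\,)\|_\F\le 6^{n/2}\|\cdot\|_\F$ from \cite[Proof of Lemma~5]{Alquier2013} yields $\|P(YY^*)-P^0\|_\F \le 6^{n/2}\|D\|_\F$, hence the desired $d\cdot 6^{n/2}\|D\|_\F$. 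The missing idea, in short, is not a sharper Schatten inequality on $D$ but the \emph{a priori} uniform bound on $\|P(YY^*)-P^0\|_\F$ coming from the fact that its entries are differences of probabilities.
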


\begin{proof}

Recall that, for all $Y \in \C^{d \times r}$, the entries of the matrices $P(YY^*)$ and $P^0$ are probabilities, and therefore lie in the interval $[0,1]$. As a result, the matrix $M := P(YY^*)-P^0$ satisfies $M_{i,j} \in [-1,1]$, hence $\| M\|_\F^2 = \sum_{i,j = 1}^d M_{i,j}^2 \leq d^2$, which implies that $\| M \|_\F \leq d$. It follows that for all $Y \in \C^{d \times r}$, 
    \begin{equation} \label{eq:norm_fro}
        \| P(YY^*)-P^0\|_\F^2 \leq d \| P(YY^*)-P^0\|_\F.
    \end{equation} 
According to \cite[Proof of Lemma 5]{Alquier2013}, there holds for all $Y \in \C^{d \times r}$
\begin{equation}\label{eq:norm_fro_in_rho_space}
    \| P(YY^*-Y^0Y^{0*})\|_\F \leq  6^{n/2} \|YY^*-Y^0Y^{0*}\|_\F.
\end{equation} 
Then, write 
\[ YY^*-Y^0Y^{0*} = \frac{1}{2} \left( (Y-Y^0) (Y+Y^0)^* + (Y+Y^0) (Y-Y^0)^*\right), \]
and note that, by the triangle inequality and submultiplicativity of the Frobenius norm \cite[p.342]{HornJohnson2013}, there holds: 
\begin{equation} \label{eq:fro_submultiplicative}
     \|YY^*-Y^0Y^{0*}\|_\F \leq \| Y-Y^0\|_\F \| Y+Y^0\|_\F.
\end{equation} 
Using again the triangle inequality, 
\begin{equation} \label{eq:triangle}
 \begin{aligned} 
\|Y^0-Y \|_\F &\leq \|Y^0-\bar Y \|_\F + \| \bar Y - Y \|_\F\\
\|Y^0+Y \|_\F &\leq \|Y^0+\bar Y \|_\F + \| Y - \bar Y \|_\F.
\end{aligned} 
\end{equation}
Combining \eqref{eq:norm_fro}, \eqref{eq:norm_fro_in_rho_space}, \eqref{eq:fro_submultiplicative} and \eqref{eq:triangle} gives:
\begin{align*}\| P(YY^*)-P^0\|_\F^2 &\leq  6^{n/2}d \left(\|Y^0-\bar Y \|_\F + \| \bar Y - Y \|_\F \right)\left(\|Y^0+\bar Y \|_\F + \| Y -\bar Y \|_\F\right) \\
& \leq 6^{n/2}d \left(\|Y^0-\bar Y \|_\F \|Y^0+\bar Y \|_\F + \|Y-\bar Y \|_\F (\|Y^0-\bar Y \|_\F + \|Y^0+\bar Y \|_\F) + \|Y-\bar Y \|_\F^2\right),
\end{align*}
so that
\begin{align*}
    \int \| P(YY^*)-P^0\|_\F^2 \nu_{\theta}(Y-\bar Y) \rd Y \leq &6^{n/2}d (\|Y^0-\bar Y \|_\F \|Y^0+\bar Y \|_\F \\ 
    &+ (\|Y^0-\bar Y \|_\F + \|Y^0+\bar Y \|_\F) \left(\int \|Y-\bar Y \|_\F \nu_{\theta} (Y-\bar Y) \rd Y \right) \\
    &+ \int \|Y-\bar Y \|_\F^2 \nu_{\theta}(Y-\bar Y) \rd Y 
    ).
\end{align*}
We compute
\[ \int \|Y\|_\F \nu_\theta(Y) \d Y = \left( \left( \int \|Y\|_\F \nu_\theta(Y) \d Y \right)^{2} \right)^{1/2} \leq \left(\int \|Y\|^2_\F \nu_\theta(Y) \d Y \right)^{1/2} = \sqrt{2rd} \theta,\] 
where the inequality is Jensen's inequality (stating that for any convex function $\phi$, there holds $\phi(\mathbb{E}(X)) \leq \mathbb{E}(\phi(X))$, for the convex function $x \mapsto x^2$, and where the last equality comes from \Cref{lem:multivariatestudent}.
Similarly, 
\begin{align*}
    \int \|Y\|_\F^2 \nu_\theta(Y) \d Y = 2dr \theta^2.
\end{align*}
The claim follows.
\end{proof}

This allows us to prove \Cref{thm:pac}, that we recall next. 

\paragraph{Theorem 1:}
 Let $\lambda = 3m/8$, and let $\bar Y \in \C^{d \times r}$ satisfy $ \|Y^0-\bar Y \|_\F \|Y^0+\bar Y \|_\F \leq 3^{-3n/2} 2^{-n/2}/m$. Let $p$ be the rank of $\bar Y$. For any $\epsilon \in ]0,1[$, there holds with probability $1-\epsilon$
\begin{align*}
    \|\hat \rho_{\lambda, \theta}^{\mathrm{BM}}  - \rho^0 \|^2_\F  &\leq \frac{3}{N_{\tot}} \left(3^{3n/4} 2^{(n+6)/4} (r + \sqrt{r} \|\bar Y\|_\F) + \frac{2r}{m}+1\right) \\
    &+\frac{3^n 8}{ 2^n  N_{\tot}} \left( \log\left(\frac{2}{\epsilon}\right) + 2p (2^{n+1}+r+2) \log \left( 1+ \frac{\| \bar Y \|_2}{\theta} \right) \right).
\end{align*} 

\begin{proof}
Note first that, according to Jensen's inequality, there holds: 
\[ \| \hat \rho_{\lambda_{\mathrm{new}},\theta}^{\mathrm{BM}} - \rho^0 \|^2_\F \leq \int \|YY^*-Y^0Y^{0*} \|^2_\F \hat \nu_{ \lambda_{\mathrm{new}},\theta}^{\mathrm{BM}}(Y) \d Y \leq \frac{1}{2^n} \int \| P(YY^*) - P^0\|_\F^2  \hat \nu_{\lambda_{\mathrm{new},\theta}}^{\mathrm{BM}}(Y) \d Y,  \]
with $\lambda_{\mathrm{new}} = \frac{\lambda}{2}(1+\frac{\lambda}{m})$ (see the proof of \Cref{lem:Catoni_application}), and where the second inequality comes from \cite[Proof of Lemma 5]{Mai2017}.
 \Cref{lem:Catoni_application} and \Cref{lem:upper_bound_proba_gap} ensure that, with probability $1-\epsilon$ for $\epsilon \in ]0,1[$,
\begin{align*}
\| \hat \rho_{\lambda_{\mathrm{new}},\theta}^{\mathrm{BM}}  - \rho^0 \|^2_\F &\leq \inf_{\bar Y \in \C^{d \times r}}  \frac{6^{n/2} d \left(1 + \frac{\lambda}{m} \right) (a + \sqrt{2dr} \theta  b + 2dr \theta^2) +2 \frac{\log\left(\frac{2}{\epsilon}\right) + 2p (2d+r+2) \log \left( 1+ \frac{\| \bar Y \|_2}{ \theta} \right)}{ \lambda}}{(1-\frac{\lambda}{m}) 2^n} \\
&\leq \inf_{\bar Y \in \C^{d \times r}} N_{\tot} \frac{6^{n/2} d \left(1 + \frac{\lambda}{m} \right) (a + \sqrt{2dr} \theta  b + 2dr \theta^2) +2 \frac{\log\left(\frac{2}{\epsilon}\right) + 2p (2d+r+2) \log \left( 1+ \frac{\| \bar Y \|_2}{ \theta} \right)}{ \lambda}}{(1-\frac{\lambda}{m}) N_{\tot} 2^n}.
\end{align*}
Let us recall that $d = 2^n$ and $N_{\tot} = 3^n m$. Choosing $\theta \propto 6^{-3n/4}/m$ gives with probability $1-\epsilon$
\begin{align*}
    \|\hat \rho_{\lambda_{\mathrm{new}},\theta}^{\mathrm{BM}}  - \rho^0 \|^2_\F  &\leq \inf_{\bar Y  \in \C^{d \times r}} \frac{m\left(1 + \frac{\lambda}{m} \right)}{N_{\tot} \left(1 - \frac{\lambda}{m} \right)} \left(a 3^{3n/2}2^{n/2} +\frac{\sqrt{2r}}{m} b 3^{3n/4} 2^{n/4} + \frac{2r}{m^2}\right) \\
    &+\frac{2m 3^n}{ 2^n\lambda (1-\frac{\lambda}{m}) N_{\tot}} \left( \log\left(\frac{2}{\epsilon}\right) + 2p (2^{n+1}+r+2) \log \left( 1+ \frac{\| \bar Y \|_2}{\theta} \right) \right).
\end{align*} 
Recall that $a = \|Y^0-\bar Y \|_\F \|Y^0+\bar Y \|_\F$ and note that
\[  b = \|Y^0-\bar Y \|_\F + \|Y^0+\bar Y \|_\F \leq 2 (\|Y^0\|_\F + \|\bar Y \|_\F) \leq (2\sqrt{r} + 2 \|\bar Y\|_\F).\]
Choose $\bar Y$ such that $a = 3^{-3n/2} 2^{-n/2}/m$ (observe indeed that $a$ can be made arbitrarily small by choosing $\bar Y$ close enough to $Y^0$), it follows that, with probability $1-\epsilon$, 
\begin{align*}
    \|\hat \rho_{\lambda_{\mathrm{new}},\theta}^{\mathrm{BM}}  - \rho^0 \|^2_\F  &\leq \frac{m\left(1 + \frac{\lambda}{m} \right)}{N_{\tot} \left(1 - \frac{\lambda}{m} \right)} \left(\frac{1}{m} +\frac{\sqrt{2r}}{m}(2\sqrt{r} + 2 \|\bar Y\|_\F) 3^{3n/4} 2^{n/4} + \frac{2r}{m^2}\right) \\
    &+\frac{2m 3^n}{ 2^n\lambda (1-\frac{\lambda}{m}) N_{\tot}} \left( \log\left(\frac{2}{\epsilon}\right) + 2p (2^{n+1}+r+2) \log \left( 1+ \frac{\| \bar Y\|_2}{\theta} \right) \right).
\end{align*} 

For large values of $n$, the right-hand side is dominated by the second term (note indeed that the first term of the right-hand side is negligible for large values of $n$ due to the exponential dependency in $n$ of $N_{\mathrm{tot}}$), which is minimized for $\lambda = m/2$ (corresponding to $\tilde \lambda_{\mathrm{new}} = 3m/8)$, leading to
\begin{align*}
    \|\hat \rho_{\lambda_{\mathrm{new}},\theta}^{\mathrm{BM}}  - \rho^0 \|^2_\F  &\leq \frac{3}{N_{\tot}} \left(3^{3n/4} 2^{(n+6)/4} (r + \sqrt{r} \|\bar Y\|_\F) + \frac{2r}{m}+1\right) \\
    &+\frac{3^n 8}{ 2^n  N_{\tot}} \left( \log\left(\frac{2}{\epsilon}\right) + 2p (2^{n+1}+r+2) \log \left( 1+ \frac{\| \bar Y \|_2}{\theta} \right) \right)
\end{align*} 
with probability $1-\epsilon$. The result follows.
\end{proof}

\subsection{Proof of Lemma 2}

\paragraph{Lemma 2:}
For all $M,N \in \C^{d \times d}$ Hermitian positive semidefinite, there holds $\tr(MN) =  \tr(\psi(M) \psi(N))$ and $\det(M) = \sqrt{2^d \det(\psi(M))}$.

\begin{proof}
Note first that, since $M$ is Hermitian positive semidefinite, $M^* = M^{R \top} -i M^{I \top} = M$ which implies that $M^R = M^{R \top}$ and $M^I = - M^{I \top}$. In other words, $M^R$ (and $N^R$) are symmetric, while $M^I$ (and $N^I$) are skew-symmetric. We compute 
    \[ \tr(\psi(M) \psi(N)) = \frac{1}{2}(\tr(M^RN^R-M^IN^I) + \tr(-M^I N^I + M^R N^R)) = \tr(M^R N^R)-\tr(M^I N^I). \]
    On the other hand, 
    \begin{align*}
        \tr(MN) &= \tr((M^R+i M^I)(N^R+i N^I) = \tr(M^R N^R) - \tr(M^I N^I) + i \tr(M^R N^I + M^I N^R) \\
        &= \tr(M^R N^R) - \tr(M^I N^I),
    \end{align*}
    where the second equality comes from the fact that the inner product between a symmetric and a skew-symmetric matrix is zero; this proves the first claim.

 For the second claim, note that Hermitian matrices are diagonalisable by unitary transformation, and that their determinant is the product of their eigenvalues. We prove that, for any eigenvalue $\lambda$ of $M$ with multiplicity $m_{\lambda}$, there exists an eigenvalue $\lambda/\sqrt{2}$ of $\psi(M)$ with multiplicity $2m_{\lambda}$. Since $M$ is Hermitian, it admits an orthogonal basis of eigenvectors. Let $v \in \C^d$ be an eigenvector of $M$, with eigenvalue $\lambda$, i.e., $Mv = (M^R v^R - M^I v^I) + i (M^R v^I + M^I v^R) = \lambda (v^R + i v^I)$. Then, 
\[ \frac{1}{\sqrt{2}} \begin{pmatrix}  M^R & -M^I \\ M^I & M^R \\ \end{pmatrix}
\begin{pmatrix} v^R \\ v^I \end{pmatrix} = \frac{\lambda}{\sqrt{2}} \begin{pmatrix} v^R \\ v^I \end{pmatrix}, \]
and
\[ \frac{1}{\sqrt{2}} \begin{pmatrix}  M^R & -M^I \\ M^I & M^R \\ \end{pmatrix}
\begin{pmatrix} v^I \\ -v^R \end{pmatrix} = \frac{\lambda}{\sqrt{2}}  \begin{pmatrix} v^I \\ -v^R \end{pmatrix}.
\]
Thus, the two vectors $(v^R, v^I)$ and $(v^I, -v^R)$ are eigenvectors of $\psi(M)$ with eigenvalue $\lambda/\sqrt{2}$. Since these vectors are orthogonal, the result then simply follows from the definition of the determinant as the product of the eigenvalues (taking into account multiplicities). 

\end{proof}

\end{document}